\def\R{\mathbb R}
\def\eps{\varepsilon}
\def\prox{\mbox{\rm prox}}
\theoremstyle{thmstyleone}%
\newtheorem{theorem}{Theorem}
\newtheorem{proposition}[theorem]{Proposition}%
\newtheorem{corollary}[theorem]{Corollary}
\newtheorem{lemma}[theorem]{Lemma}
\theoremstyle{thmstyletwo}%
\newtheorem{example}{Example}%
\newtheorem{remark}{Remark}%
\newtheorem{definition}{Definition}%
\begin{document}

\title[Novel Dynamical Systems with Finite-Time and Predefined-Time Stability for Generalized Inverse Mixed Variational Inequality Problems]{Novel Dynamical Systems with Finite-Time and Predefined-Time Stability for Generalized Inverse Mixed Variational Inequality Problems}


\author[1]{\fnm{Nam Van} \sur{Tran}}\email{namtv@hcmute.edu.vn}

\affil[1]{\orgdiv{Faculty of Applied Sciences}, \orgname{Ho Chi Minh City  University of Technology and Engineering}, \orgaddress{\street{Vo Van Ngan}, \city{Ho Chi Minh City}, 
\country{Vietnam}}}


\abstract{This paper investigates a class of generalized inverse mixed variational inequality problems (GIMVIPs), which consist in finding a vector $\overline{w}\in \R^d$ such that
\[
F(\bar w)\in \Omega \quad \text{and} \quad 
\langle h(\bar w), v-F(\bar w) \rangle + g(v)-g(F(\bar w)) \ge 0,
\quad \forall v\in \Omega,
\]
where \(h,F:\R^d\to\R^d\) are single-valued operators, \(g:\Omega\to\R\cup\{+\infty\}\) is a proper function, and \(\Omega\) is a closed convex set.

Two novel continuous-time dynamical systems are proposed to study the finite-time and predefined-time stability of solutions to GIMVIPs in finite-dimensional Hilbert spaces. Under suitable assumptions on the involved operators and model parameters, Lyapunov-based techniques are employed to establish finite-time and predefined-time convergence of the generated trajectories.

Although both dynamical systems exhibit accelerated convergence, the settling time of the finite-time stable system depends on the initial condition, whereas the predefined-time stable system admits a uniform upper bound on the convergence time that is independent of the initial state and can be explicitly prescribed through user-selected parameters. Moreover, by applying a forward Euler discretization to the continuous-time dynamics, a proximal point-type iterative algorithm is derived, and its fixed-time convergence property is rigorously analyzed. Numerical experiments are provided to illustrate the effectiveness and advantages of the proposed methods.

}


\keywords{Finite-time stability, predefined-time Stability, proximal point Method, Dynamical System, generalized Inverse mixed Variational Inequality Problem.}

\pacs[MSC Classification]{47J20; 47J30; 49J40; 49J53; 49M37}


\maketitle

\section{Introduction}
Let \(\langle \cdot,\cdot \rangle\) and \(\|\cdot\|\) denote the inner product and the associated norm in \(\mathbb{R}^d\), respectively. 
This paper is concerned with a class of inverse equilibrium problems known as the \emph{generalized inverse mixed variational inequality problem} (GIMVIP). 
Given a nonempty closed and convex set \(\Omega \subset \mathbb{R}^d\), the GIMVIP consists in finding a vector \(\bar w \in \mathbb{R}^d\) such that
\begin{equation}\label{GIMVIP}
F(\bar w)\in \Omega \quad \text{and} \quad
\langle h(\bar w), v - F(\bar w) \rangle + g(v) - g(F(\bar w)) \ge 0,
\quad \forall v \in \Omega .
\end{equation}

Problem~\eqref{GIMVIP} provides a unifying framework that encompasses several important models in variational analysis. 
If \(h\) is the identity mapping, then \eqref{GIMVIP} reduces to the inverse mixed variational inequality problem introduced in~\cite{CLi}. 
When \(g \equiv 0\), it collapses to the inverse variational inequality problem (IVIP), originally proposed by He and Liu~\cite{BSHe}. 
On the other hand, if \(F\) is the identity operator, \eqref{GIMVIP} becomes a mixed variational inequality problem (MVIP) and further reduces to the classical variational inequality problem (VIP) when \(g \equiv 0\).

Variational inequality problems constitute a fundamental modeling framework for equilibrium phenomena arising in optimization, saddle-point problems, Nash equilibria in noncooperative games, and fixed-point theory; see, for instance,~\cite{BlumOettli94,Cav}. 
Beyond direct formulations, \emph{inverse} variational inequality problems have received increasing attention due to their ability to infer unknown system parameters from observed equilibrium states. 
Such inverse models naturally arise in applications where equilibrium behavior is observable, whereas the underlying cost structures, control parameters, or governing operators are not directly accessible.

In recent years, inverse variational inequality models have been successfully applied in traffic network analysis, economic equilibrium modeling, and telecommunication systems. 
To further enhance modeling flexibility, inverse mixed variational inequality problems were proposed by incorporating nonsmooth and composite terms, allowing richer representations of practical constraints. 
Subsequent developments led to generalized inverse mixed variational inequality formulations, which subsume many existing inverse models and provide a comprehensive framework for inverse equilibrium analysis. 
Motivated by these advances, the present work focuses on GIMVIPs as a versatile and unified inverse equilibrium model.

Alongside model generalization, the design of efficient solution methods remains a central challenge. 
Continuous-time dynamical systems have emerged as a powerful tool for solving variational inequalities and related equilibrium problems. 
By reformulating equilibrium conditions as dynamical processes, one can exploit Lyapunov stability theory to analyze convergence properties. 
Compared with purely discrete-time schemes, continuous-time approaches offer conceptual simplicity, potential for real-time implementation, and a natural interpretation from a feedback control perspective.

Most existing dynamical system approaches for variational and inverse variational inequality problems guarantee asymptotic or exponential stability. 
Specifically, dynamical system techniques have been applied to optimization problems~\cite{BEN,COR,GAR,HE,LIEN,ROMEO}, sparse optimization~\cite{GARG4,HE,YU}, control and multi-agent systems~\cite{ZUO}, mixed variational inequality problems~\cite{Garg21,JU3}, as well as variational inequality and equilibrium problems~\cite{JU4,JU5}. 
More recently, Nam \emph{et al.}~\cite{NAM_HAI} investigated finite-time stability for generalized monotone inclusion problems, while fixed-time stable dynamical systems for such inclusions were developed in~\cite{NHVA}. 
In addition, Ju \emph{et al.}~\cite{XXJU} employed continuous-time dynamical systems to establish fixed-time stability for pseudo-monotone mixed variational inequality problems.

For inverse and inverse quasi-variational inequality problems, dynamical system methods have also been explored; see, for example,~\cite{ZOU2}. 
In particular, Dey and Reich~\cite{Dey} proposed a first-order continuous-time dynamical system for inverse quasi-variational inequality problems and established global exponential stability. 
Subsequently, Vuong and Thanh~\cite{VuongThanh} analyzed a time-discretized version of a related system and obtained linear convergence results under a more general moving set condition. 
More recently, Tangkhawiwetkul~\cite{JIT} studied generalized inverse mixed variational inequality problems using dynamical system techniques and derived asymptotic as well as exponential stability results.

However, in many applications requiring rapid, predictable, and time-critical convergence, asymptotic or exponential stability may be insufficient. 
Finite-time stability, introduced in~\cite{BHAT}, guarantees convergence within a finite settling time, although this time typically depends on the initial condition. 
To address this limitation, Polyakov~\cite{polyakov} introduced the concept of fixed-time stability, for which the convergence time admits a uniform upper bound independent of the initial state. 
Despite their practical relevance, finite-time and fixed-time convergence properties have not yet been systematically investigated for generalized inverse mixed variational inequality problems.

Beyond finite-time and fixed-time stability, an even stronger notion, known as \emph{predefined-time stability}, was introduced by Sánchez-Torres \emph{et al.}~\cite{Sanchez2014}. 
In contrast to fixed-time stability, where convergence is guaranteed within an unknown but bounded time, predefined-time stability allows the convergence time to be explicitly prescribed in advance through user-selected parameters and remains independent of the initial condition. 
This feature provides enhanced flexibility and autonomy in system design, enabling practitioners to enforce convergence within a desired time horizon that meets practical control or performance requirements.

Predefined-time stability has been actively studied in control and synchronization contexts. 
For instance, Anguiano-Gijón \emph{et al.}~\cite{Ato} developed Lyapunov-function-based control schemes for predefined-time synchronization of chaotic systems, while Liu \emph{et al.}~\cite{ALIU} proposed novel predefined-time stability lemmas and applied them to synchronization problems. 
These studies demonstrate the effectiveness of predefined-time stability in facilitating coordination and information exchange among interconnected systems. 
Nevertheless, the incorporation of predefined-time stability into optimization and inverse equilibrium problem frameworks remains largely unexplored.

Motivated by the above observations, this paper develops novel first-order continuous-time dynamical systems for solving generalized inverse mixed variational inequality problems and establishes their finite-time, fixed-time, and predefined-time stability properties. 
By applying a forward Euler discretization, we further derive a proximal point-type iterative algorithm and show that the generated sequence converges to the unique solution of the GIMVIP within a prescribed time horizon. 
The proposed framework not only generalizes existing dynamical system approaches for inverse variational inequality problems but also provides efficient and user-controllable solution methods for generalized inverse equilibrium models.

The remainder of the paper is organized as follows.
Section~\ref{Preliminaries} introduces preliminary material, including notions of finite-time, fixed-time, and predefined-time stability, and establishes the correspondence between GIMVIPs and the equilibrium points of an associated proximal dynamical system.
Section~\ref{finite stability} investigates the finite-time stability of a newly proposed dynamical system.
Section~\ref{sec3} is devoted to the fixed-time and predefined-time stability analysis of an alternative system.
Section~\ref{sec4}  studies the consistency of its time discretization, ensuring convergence within a prescribed number of iterations.
Numerical experiments illustrating the effectiveness of the proposed methods are reported in Section~\ref{num}.
Finally, Section~\ref{sec6} concludes the paper with a summary of the main results.

	\section{Preliminaries} \label{Preliminaries}
	
	This section introduces the basic notation, definitions, and auxiliary results that will be used throughout the paper. Throughout the paper, we let $\Omega$ be a closed, convex, nonempty subset of $\R^d$, we denote by $B( \overline{w}, \delta)$  the open ball with center $\overline{w}$ and radius $\delta>0$.

	\subsection{Notions and auxiliary results}

	\begin{definition}
	    Let   $F: \R^d \to \R^d$ be a single-valued mapping and $g~:\Omega \longrightarrow \R\cup\{+\infty\}$ be a proper, convex, lower semicontinuos (l.s.c.). We say that $(F, h))$ is $\mu$-strongly monotone couple  with modulus  $\lambda\geq 0$ if for any $u, v\in \R^d$ we have 
    \begin{equation*}\label{mon}
       \langle F(w)-F(v), h(w)-h(v)\rangle \geq \mu \|w-v\|^2.
    \end{equation*}
	When $\mu=0$ we say that $(F,h)$ is \emph{monotone couple}. 
       \end{definition} 
    If $g$ is the identity mapping, the notion of strongly monotone couple reduces to the classical one of (strong) monotonicity. In fact, $F$ is said to be $\mu$-strongly monotone if 
     \begin{equation*}\label{mon}
       \langle F(w)-F(v), w-v\rangle \geq \mu \|w-v\|^2\quad  \forall w, v\in \R^d. 
    \end{equation*}
 
    In addition, if $F$ is strongly monotone with modules $\mu\geq 0$ then we have 
    \begin{equation*}
        \|F(w)-F(v)\|\geq \mu \|w-v\| \quad \forall  w, v\in \R^d.
    \end{equation*} 
	
\begin{definition}
    Let $F: \R^d  \to \R^d$ be a single-valued mapping. The mapping $F$ is said to be 
    \begin{enumerate}
        \item[(i)] $\beta$-\emph{Lipschitz continuous} on \(\R^d\) if 
    \begin{equation*}
        \|F(w)-F(v)\|\leq \beta\|w-v\| \quad \forall w, v\in \R^d.
    \end{equation*} 
    When $\beta=1$, we say that $F$ is {\it nonexpansive.} 
    \item [(ii)] $\rho$-\emph{cocoercive} on $\R^d$ ($\rho >0$) if   
    \begin{equation*}
         \langle F(w)-F(v), w-v\rangle \geq \rho\|F(w)-F(v)\|^2 \quad \forall w, v\in \R^d. 
    \end{equation*}
    \end{enumerate}
\end{definition}
\begin{remark}
    It is worth noting that if $F$ is $\rho$-cocoercive then $F$ is Lipschitz continuous with Lipschitz constant $\beta=\frac{1}{\rho}$. 
\end{remark}	

	\noindent We now recall an important notion of proximal operator, which is a useful tool in studying variational analysis. Let $g:\Omega \longrightarrow \R\cup\{+\infty\}$ be a l.s.c. function.     For every $w\in \R^d$, we define 
    \begin{equation*}
        \prox_\Omega^g(w)=\mbox{\rm argmin}_{v\in \Omega} \left\{\|g(v)+\frac{1}{2}\|w-v\|^2\right\}.
    \end{equation*}
and called the \emph{proximal operator} of $g$ on $\Omega$. 
It is worth noting that when $g$ is the indicator function of a set $\Omega$, that is,
\[
g(w) \equiv \iota_\Omega(w)=
\begin{cases}
0, & \text{if } w\in \Omega,\\
+\infty, & \text{if } w\notin \Omega, 
\end{cases}
\]
the proximal operator reduces to the projection operator onto $C$. Hence, in some works, such as \cite{JIT}, the proximal operator is also referred to as a generalized projection.

The proximal operator admits the following properties.
\begin{lemma} \cite{Combet} 
Let $\Omega$ be a nonempty, closed, convex set in \(\R^d\). Let $w,  v \in \R^d$. Then  one has
\begin{enumerate}
    \item[(i)] $\overline{w}=\prox_\Omega^g(w)  \iff \langle \overline{w}-u, v-\overline{w}\rangle +g(v)-g\left(\overline{w}\right) \geq 0 \quad \forall v\in \Omega$. 
    \item [(ii)] $\|\prox_\Omega^g(w)-\prox_\Omega^g(v)\|^2 \leq \langle w-v, \prox_\Omega^g(w)-\prox_\Omega^g(v)\rangle $.
    \item [(iii)] $\prox_\Omega^g$ is nonexpansive, i.e. $\|\prox_\Omega^g(w)-\prox_\Omega^g(v)\| \leq  \|w-v\|.$ 
\end{enumerate}
\end{lemma}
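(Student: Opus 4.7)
The plan is to establish the three assertions in order, treating (i) as the foundational step from which (ii) and (iii) follow by routine manipulations. For (i), I would start from the definition of the proximal operator as the unique minimizer of the strictly convex, proper, lower semicontinuous function $\phi(v) := g(v) + \tfrac{1}{2}\|w-v\|^2$ over the closed convex set $\Omega$. Standard convex optimality theory then tells me that $\overline{w} \in \Omega$ is the minimizer if and only if the directional derivative condition holds along every feasible direction, i.e.\ $\phi'(\overline{w}; v-\overline{w}) \geq 0$ for all $v \in \Omega$. Since the quadratic term is differentiable with gradient $\overline{w}-w$ at $\overline{w}$, and $g$ is convex, this condition reduces to $\langle \overline{w}-w, v-\overline{w}\rangle + g(v) - g(\overline{w}) \geq 0$ for all $v \in \Omega$, which is exactly the claimed characterization.

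For (ii), I would apply the characterization from (i) twice: once with $\overline{w} := \prox_\Omega^g(w)$ and the test point $v := \prox_\Omega^g(v)$, and once with the roles of $w$ and $v$ reversed. Denoting $\overline{w} = \prox_\Omega^g(w)$ and $\overline{v} = \prox_\Omega^g(v)$, this yields
\begin{equation*}
\langle \overline{w}-w,\,\overline{v}-\overline{w}\rangle + g(\overline{v}) - g(\overline{w}) \geq 0,
\qquad
\langle \overline{v}-v,\,\overline{w}-\overline{v}\rangle + g(\overline{w}) - g(\overline{v}) \geq 0.
\end{equation*}
Adding the two inequalities cancels the $g$-terms, and after rearranging the inner products around the common factor $\overline{w}-\overline{v}$, a single expansion produces $\|\overline{w}-\overline{v}\|^2 \leq \langle w-v,\,\overline{w}-\overline{v}\rangle$, which is exactly (ii).

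For (iii), I would simply combine (ii) with the Cauchy--Schwarz inequality to obtain $\|\overline{w}-\overline{v}\|^2 \leq \|w-v\|\cdot\|\overline{w}-\overline{v}\|$; dividing by $\|\overline{w}-\overline{v}\|$ (with the trivial case $\overline{w} = \overline{v}$ handled separately) yields the nonexpansiveness estimate.

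The main obstacle is essentially bookkeeping in step (ii): the two variational inequalities contain pairings of the form $\langle \overline{w}-w,\overline{v}-\overline{w}\rangle$ and $\langle \overline{v}-v,\overline{w}-\overline{v}\rangle$, which involve alternating signs on the differences $\overline{w}-\overline{v}$ versus $\overline{v}-\overline{w}$, so one has to be careful when combining them to isolate the term $\|\overline{w}-\overline{v}\|^2$. Apart from this sign-tracking, no nontrivial technical step is needed, since the argument is a direct consequence of the first-order optimality condition for the convex minimization defining the proximal operator.
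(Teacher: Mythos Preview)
Your argument is correct and is the standard derivation of these properties from the first-order optimality condition for the convex minimization defining $\prox_\Omega^g$. Note that the paper does not actually supply a proof of this lemma: it is simply quoted from the reference \cite{Combet}, so there is no paper-side proof to compare against.
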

We refer the readers to \cite{Combet} for more details in this operator. 

Throughout the sequel, $\mathrm{Zer}(\Xi)$ denotes the solution set of the generalized equation $\Xi(w)=0$, and $\dot V$ stands for the derivative of $V$.
	\subsection{Equilibrium points and stability}
\begin{definition} \cite{Pappaladro02} Consider the following differential equation system
	\begin{equation}\label{JJS1}
		\dot{w}(t) = \Xi(w(t)) \quad t \ge 0, 
	\end{equation}
	where $\Xi: \mathbb{R}^n \rightarrow \mathbb{R}^n$ is a continuous mapping and $u:[0, +\infty)\to \R^d$.

		A point $\overline{w}\in \R^d$ is said to be an equilibrium point of \eqref{JJS1} if
$\Xi(\overline{w})=0$.

An equilibrium point $\overline{w}$ is called \emph{Lyapunov stable} if, for any
$\epsilon>0$, there exists $\kappa>0$ such that, for every
$u_0 \in B(\overline{w}, \kappa)$, the corresponding solution $w(t)$ of the dynamical
system \eqref{JJS1} with $w(0)=u_0$ exists and satisfies
$w(t)\in B(\overline{w}, \epsilon)$ for all $t>0$.

An equilibrium point $\overline{w}$ is called \emph{finite-time stable} if it is
Lyapunov stable and there exist a neighborhood $B(\overline{w}, \delta)$ of $\overline{w}$ and
a settling-time function
$T: B(\overline{w}, \delta)\setminus \{\overline{w}\}\to (0,\infty)$ such that, for any
$w(0)\in B(\overline{w}, \delta)\setminus \{\overline{w}\}$, the solution of \eqref{JJS1}
satisfies $w(t)\in B(\overline{w}, \delta)\setminus \{\overline{w}\}$ for all
$t\in [0,T(w(0)))$ and
\[
\lim_{t\to T(w(0))} w(t)=\overline{w}.
\]

It is called \emph{globally finite-time stable} if it is finite-time stable
with $B(\overline{w}, \delta)=\R^d$.

Finally, $\overline{w}$ is called \emph{fixed-time stable} if it is globally
finite-time stable and the settling-time function satisfies
\[
\sup_{w(0)\in \R^d} T(w(0)) < \infty.
\]	
If we can adjust the system parameters such that the settling time in a fixed-time stable dynamical system can be predicted, then the equilibrium is called a predefined-time stable equilibrium.  
\end{definition}

In \cite{BHAT}, Bhat and Bernstein provided a sufficeint condition for the finite-time stability of solutions of a differential equation system \eqref{JJS1}, which is frequently used to verify the fintie and fixed time stability.

\begin{theorem}\cite{BHAT} \label{lm1-finite}	(Lyapunov condition for finite-time stability).  Let $\overline{w}\in \R^d$ be an equilibrium point of \eqref{JJS1}, and $D \subseteq \R^d$  be a neighborhood of  \(\overline{w}\). Assume that there exist a continuously differentiable function $V : D\rightarrow \R$,  and an open neighborhood $U\subseteq D$ of $\overline{w}$ such that
\begin{equation*}
	\dot{V}(w) \leq -M \,\big(V(w)\big)^r \quad \forall w\in U\setminus \{\overline{w}\},
\end{equation*}
where $M>0$ and $r\in (0,1).$  Then \(\overline{w}\) is a finite-time stable equilibrium point of \eqref{JJS1}. Moreover, the settling time $T$ satisfies 

\begin{equation*}
	T(w(0)) \leq \dfrac{V(w(0))^{1-r}}{M(1-r)}
\end{equation*}
for any $w(0) \in U$.  

Furthermore,  the equilibrium point \(\overline{w}\) of \eqref{JJS1} is globally finite-time stable when  $D =\R^d$.
\end{theorem}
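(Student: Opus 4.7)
The plan is to combine a standard Lyapunov-stability argument with a direct integration of the differential inequality $\dot V(w)\le -M V(w)^r$ along trajectories. First, I would observe that the hypothesis forces $\dot V\le 0$ on $U\setminus\{\overline w\}$, so $V$ is nonincreasing along any solution starting in $U$. A standard sublevel-set argument then yields Lyapunov stability: pick $\epsilon>0$ with $B(\overline w,\epsilon)\subset U$, set $\alpha:=\min_{\|w-\overline w\|=\epsilon}V(w)>0$ (using positive definiteness of $V$ at $\overline w$, which is implicit here), and choose $\kappa>0$ so that $V(w)<\alpha$ on $B(\overline w,\kappa)$; monotonicity of $V\circ w(\cdot)$ then traps the trajectory inside $B(\overline w,\epsilon)\subset U$, so existence is not lost and the differential inequality remains in force as long as $w(t)\neq\overline w$.

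Second, along such a trajectory I would set $v(t):=V(w(t))$ and use the chain rule to get $\dot v(t)=\langle\nabla V(w(t)),\Xi(w(t))\rangle\le -Mv(t)^r$ while $v(t)>0$. Since $r\in(0,1)$, multiplying by $v(t)^{-r}$ gives
\[
\frac{d}{dt}\!\left[\frac{v(t)^{1-r}}{1-r}\right]\;\le\;-M,
\]
and integrating on $[0,t]$ yields $v(t)^{1-r}\le v(0)^{1-r}-M(1-r)\,t$. The right-hand side vanishes at $T^\star:=V(w(0))^{1-r}/(M(1-r))$, and since $v\ge 0$ this forces $v(t)=0$ for all $t\ge T^\star$. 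Defining $T(w(0))$ as the infimum of times with $w(t)=\overline w$, continuity of the trajectory gives $\lim_{t\to T(w(0))}w(t)=\overline w$ together with the claimed settling-time estimate. For the global claim, when $D=\R^d$ the confinement step is unnecessary and the same integration applies for any $w(0)\in\R^d$.

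The main technical obstacle I anticipate is the singularity of the scalar majorant $-Mv^r$ at $v=0$: this is precisely what enables finite-time extinction, but it breaks the standard Picard/Gr\"onwall machinery, so one cannot simply invoke uniqueness for the comparison ODE. I would handle this by a one-sided monotone comparison argument — integrating the inequality only on the open set $\{t:v(t)>0\}$, which is an open subset of $[0,\infty)$, and then taking limits from the left at any endpoint — together with the observation that once $v$ hits $0$ the function $V$ cannot rise again (by $\dot V\le 0$), so $w(t)\equiv\overline w$ thereafter. The uniqueness of solutions past $\overline w$ is not required: any continuous solution staying at the equilibrium suffices for the stability conclusion.
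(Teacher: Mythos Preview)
The paper does not prove this theorem: it is quoted verbatim from Bhat and Bernstein \cite{BHAT} as a known tool, with no proof given. Your outline is the standard Bhat--Bernstein argument (Lyapunov stability via sublevel sets, then direct integration of $\dot v\le -Mv^r$ to obtain $v(t)^{1-r}\le v(0)^{1-r}-M(1-r)t$), so there is nothing to compare against; your plan matches the original source and is correct modulo the implicit positive-definiteness assumption on $V$, which you rightly flag.
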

As for the fixed time stability of the solution of the dynamical system \eqref{JJS1},   in  \cite{polyakov},  Polyakov gave a sufficient condition to guarantee the fixed time stability. This sufficient condition is recalled below. 

 	\begin{theorem} \label{lm1}	(Lyapunov condition for fixed-time stability). An equilibrium point $\overline{w}$ of \eqref{JJS1} is fixed time stable if  there exists 	a radially unbounded, continuously differentiable function $V~: \R^d\rightarrow \R$ such that
		\begin{equation*}
			V(\overline{w}) = 0, \mbox{  }V(w) > 0
		\end{equation*}
		for all $w\in  \R^d \setminus \{\overline{w}\}$ and
		\begin{equation} \label{inq of Lyapunov} 
			\dot{V}(w) \leq  -\left(A_1V(w)^{r_1} +A_2V(w)^{r_2}\right)
		\end{equation}
		for all $w\in \R^d \setminus \{\overline{w}\}$, where  $A_1,A_2,r_1,r_2> 0$ such that $r_1 <1$ and $r_2 >1$. Moreover, the settling time function satisfied 
		\begin{equation*}\label{time1}
			T(w(0)) \leq \dfrac{1}{A_1(1-r_1)} +\dfrac{1}{A_2(r_2-1)}
		\end{equation*}
		for any $w(0) \in \R^d$.  
		In addition, choosing $r_1=\left(1-\frac{1}{2\chi}\right), r_2=\left(1+\frac{1}{2\chi}\right),$ with $\chi>1$  in \eqref{inq of Lyapunov}, the settling time function is given by
		\begin{equation*}\label{time2}
			T(w(0))\leq T_{\max } =\dfrac{\pi\chi}{\sqrt{A_1A_2}}. 
		\end{equation*}		
	\end{theorem}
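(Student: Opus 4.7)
The plan is to reduce the problem to a scalar ordinary differential inequality along trajectories and then integrate it by separation of variables, exactly in the spirit of Theorem~\ref{lm1-finite}. Fix a solution $w(t)$ of \eqref{JJS1} starting at $w(0)\neq \overline{w}$ and set $v(t):=V(w(t))$. Since $V$ is continuously differentiable and $w(t)$ satisfies \eqref{JJS1}, the chain rule together with the hypothesis yields, as long as $w(t)\neq \overline{w}$,
\[
\dot v(t)\leq -\bigl(A_1 v(t)^{r_1} + A_2 v(t)^{r_2}\bigr).
\]
Because $V$ is radially unbounded and strictly decreasing along trajectories, the sublevel set $\{w\in\R^d:\ V(w)\leq V(w(0))\}$ is compact and forward-invariant, which both secures Lyapunov stability of $\overline{w}$ and guarantees global existence of the trajectory $w(\cdot)$.

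Next I would apply a standard scalar comparison argument. Separation of variables gives $-dv/(A_1 v^{r_1}+A_2 v^{r_2})\ge dt$, and integrating from $0$ up to the first time $T(w(0))$ at which $v$ vanishes produces
\[
T(w(0))\leq \int_0^{V(w(0))}\frac{dv}{A_1 v^{r_1}+A_2 v^{r_2}}.
\]
To obtain a bound independent of $w(0)$, I would split the integration interval at $v=1$: on $[0,1]$ the denominator dominates $A_1 v^{r_1}$, while on $[1,V(w(0))]$ it dominates $A_2 v^{r_2}$. Since $r_1<1$ and $r_2>1$, both resulting improper integrals converge and yield
\[
\int_0^1 \frac{dv}{A_1 v^{r_1}}=\frac{1}{A_1(1-r_1)},\qquad \int_1^\infty \frac{dv}{A_2 v^{r_2}}=\frac{1}{A_2(r_2-1)},
\]
whose sum is precisely the claimed uniform settling-time bound.

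For the refined estimate with $r_1=1-\tfrac{1}{2\chi}$ and $r_2=1+\tfrac{1}{2\chi}$, I would avoid the splitting and instead perform the substitution $u=v^{1/(2\chi)}$, so that $dv=2\chi u^{2\chi-1}du$, $v^{r_1}=u^{2\chi-1}$ and $v^{r_2}=u^{2\chi+1}$. This collapses the integrand to a rational function:
\[
\int_0^\infty\frac{dv}{A_1 v^{r_1}+A_2 v^{r_2}}
=\int_0^\infty\frac{2\chi\, du}{A_1+A_2 u^2}
=\frac{\pi\chi}{\sqrt{A_1 A_2}},
\]
using the elementary arctangent antiderivative. Since this bound holds uniformly in $w(0)$, it delivers the predefined-time upper bound $T_{\max}$.

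The main subtlety I would handle carefully is the bridge from the pointwise differential inequality, valid only on $\R^d\setminus\{\overline{w}\}$, to the conclusion that $w(t)$ actually reaches $\overline{w}$ in finite time rather than merely approaching it. Because the scalar majorant ODE $\dot z=-(A_1 z^{r_1}+A_2 z^{r_2})$ with $z(0)=V(w(0))$ collapses to $0$ in finite time, a comparison lemma forces $v(t)\leq z(t)$, so $v$ vanishes no later than $z$; positivity of $V$ away from the equilibrium then forces $w(t)=\overline{w}$ from that instant on, closing the fixed-time argument.
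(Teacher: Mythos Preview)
Your argument is correct and is in fact the standard proof of Polyakov's fixed-time stability criterion: reduce to the scalar differential inequality for $v(t)=V(w(t))$, separate variables, bound the resulting integral by splitting at $v=1$ for the first estimate, and use the substitution $u=v^{1/(2\chi)}$ together with the arctangent integral for the refined estimate. The handling of Lyapunov stability via forward invariance of sublevel sets and of finite-time arrival via comparison with the majorant ODE is also appropriate.

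There is nothing to compare against, however: the paper does not supply its own proof of this theorem. It is stated in the preliminaries as a result recalled from \cite{polyakov} and is used later as a black box in the proof of Theorem~\ref{tr2}. So your proposal fills in a proof that the paper deliberately omits by citation rather than offering an alternative to an existing argument.
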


\subsection{A nominal dynamical system }
In \cite{JIT} the author proposed the following dynamical system for solving the GIMVIPs: 
$$\dot{w}=\kappa \Big(F(w)-\mbox{\rm prox}_{\Omega}^{\gamma g}\big(F(w)-h(w)\big)\Big), \quad \kappa>0.$$	
\noindent In the present paper, we generalize this model by allowing $\kappa$ to be a function of $w$. Accordingly, we consider the following dynamical system for GIMVIPs:
\begin{equation}\label{hdl2}
    \dot{x}=\tau(w)\Xi(w)
\end{equation}
with \[\Xi(w):=F(w)-\mbox{\rm prox}_{\Omega}^{\gamma g}\big(F(w)-h(w)\big),\] where \(\gamma>0\) is a parameter, $\tau (w)>0$ for all $w\in \R^d\setminus \mbox{\rm Zer}(\Xi)$.  
\\
For notational convenience, we denote $$B(w):=\prox^{\gamma g}_{\Omega}  \big(F(w)-h(w)\big).$$
 The following proposition gives a relationship between the equilibrium point of the dynamical system \eqref{hdl2} and the zero point of $\Xi$. 
\begin{proposition}\cite{JIT}  Let $h, F~:  \R^d \longrightarrow \R^d$ be   single-valued mappings. Let $ g~: \Omega \longrightarrow \mathbb{R}\cup \{+\infty\}$  be a proper, convex, and lower semicontinuous function. Then 
   $\overline{w}\in \R^d$  is a solution to GIMVIP \eqref{GIMVIP} if and only if \(\overline{w}\) is a zero point  of $\Xi$, i.e, \[\Xi(\overline{w})\equiv F(\overline{w})-\mbox{\rm prox}^{\gamma g}_\Omega\left(F(\overline{w}\right)-h\left(\overline{w}\right))=0.\]  
\end{proposition}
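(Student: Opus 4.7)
The strategy is to apply the variational characterization of the proximal operator recorded in Lemma~(i). That characterization says precisely that $\overline{w}$ is the prox image of $w$ if and only if a certain variational inequality on $\Omega$ holds. Since both the GIMVIP condition and the zero condition $\Xi(\overline{w})=0$ are variational inequalities of the same shape on $\Omega$, the equivalence will drop out of a single substitution.

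Concretely, for the ``$\Leftarrow$'' direction I would start from the equation
\[
F(\overline{w}) = \prox^{\gamma g}_\Omega\bigl(F(\overline{w}) - h(\overline{w})\bigr),
\]
which already forces $F(\overline{w})\in\Omega$ by the range of the proximal operator. Applying Lemma~(i) with $w:=F(\overline{w})-h(\overline{w})$ and with $\gamma g$ playing the role of $g$, I obtain
\[
\bigl\langle F(\overline{w}) - (F(\overline{w})-h(\overline{w})),\; v-F(\overline{w})\bigr\rangle + \gamma g(v) - \gamma g(F(\overline{w})) \geq 0, \quad \forall v\in\Omega.
\]
The leading inner-product term collapses to $\langle h(\overline{w}),\, v-F(\overline{w})\rangle$, and since $\gamma>0$ a positive rescaling of the $g$-terms leaves the solution set of \eqref{GIMVIP} unchanged, yielding exactly the GIMVIP inequality. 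Thus $\overline{w}$ solves \eqref{GIMVIP}.

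For the converse direction, I would simply read the same chain backwards: starting from the GIMVIP inequality for $\overline{w}$, I first note $F(\overline{w})\in\Omega$ by hypothesis, then rewrite $\langle h(\overline{w}),\, v-F(\overline{w})\rangle$ in the form $\langle F(\overline{w}) - (F(\overline{w})-h(\overline{w})),\, v-F(\overline{w})\rangle$, scale the $g$-terms by $\gamma$, and invoke the converse part of Lemma~(i) to identify $F(\overline{w})$ with $\prox^{\gamma g}_\Omega(F(\overline{w})-h(\overline{w}))$, i.e.\ $\Xi(\overline{w})=0$. Because Lemma~(i) is itself a biconditional, both implications arise from the same reasoning.

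The proof is essentially a direct substitution into the proximal characterization and I do not anticipate any real analytic obstruction. The only point that warrants care is the bookkeeping of the parameter $\gamma$ when rescaling between $g$ and $\gamma g$ and ensuring that the inner-product term is rewritten consistently; once that is tracked correctly, the equivalence is immediate.
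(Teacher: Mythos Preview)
The paper does not supply its own proof of this proposition; it is quoted from \cite{JIT} and used as a black box, so there is no argument in the paper to compare against. Your approach via the variational characterization of the proximal operator (Lemma~(i)) is exactly the standard route and is the one any proof of this statement would take.

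That said, the one step you yourself flag as needing care is in fact not correct as written. From Lemma~(i) applied to $\gamma g$ you obtain
\[
\langle h(\overline{w}),\, v-F(\overline{w})\rangle + \gamma\bigl(g(v)-g(F(\overline{w}))\bigr)\ge 0,\qquad \forall v\in\Omega,
\]
and you then assert that ``a positive rescaling of the $g$-terms leaves the solution set of \eqref{GIMVIP} unchanged.'' This is false in general: the inequality above is the GIMVIP for the pair $(h,\gamma g)$, and replacing $g$ by $\gamma g$ with $\gamma\neq 1$ genuinely changes the solution set (take $d=1$, $\Omega=\mathbb{R}$, $h=F=\mathrm{id}$, $g(w)=w$; the unique solution is $-1$ for $g$ and $-\gamma$ for $\gamma g$). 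What the prox identity actually proves is the equivalence between $\Xi(\overline{w})=0$ and the GIMVIP formulated with $\gamma g$, not with $g$. Either the proposition is implicitly taking $\gamma=1$, or the intended operator carries a matching $\gamma$ on $h$ (so that the prox argument is $F(w)-\gamma h(w)$); in either of those readings your argument goes through verbatim. The substitution into Lemma~(i) is exactly right---only the ``rescaling is harmless'' sentence needs to be replaced by one of these two honest fixes.
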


From this result, we directly obtain the following consequence.
\begin{corollary}\label{mlh1}
    Let $h, F~:  \R^d \longrightarrow \R^d$ and  $ g~: \Omega \longrightarrow \mathbb{R}\cup \{+\infty\}$ be a proper convex l.s.c function. Then, $\overline{w}\in \R^d$ is a solution to the GIMVIP \eqref{GIMVIP} if and only if it is an equilibrium point of the system \eqref{hdl2}. 
\end{corollary}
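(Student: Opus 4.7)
The plan is to reduce the corollary to the immediately preceding proposition by observing that the scaling factor $\tau(w)$ does not create or destroy equilibria. More precisely, by definition an equilibrium $\overline{w}$ of \eqref{hdl2} satisfies $\tau(\overline{w})\Xi(\overline{w})=0$, so the task is to show that this is equivalent to $\Xi(\overline{w})=0$, and then to invoke the preceding proposition to identify $\mathrm{Zer}(\Xi)$ with the solution set of \eqref{GIMVIP}.

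First I would handle the forward implication: if $\overline{w}$ solves \eqref{GIMVIP}, then the preceding proposition gives $\Xi(\overline{w})=0$, and hence $\tau(\overline{w})\Xi(\overline{w})=0$ regardless of the value of $\tau(\overline{w})$, so $\overline{w}$ is an equilibrium of \eqref{hdl2}. For the reverse implication, suppose $\overline{w}$ is an equilibrium of \eqref{hdl2} but, for contradiction, $\overline{w}\notin\mathrm{Zer}(\Xi)$. Then by the standing hypothesis on $\tau$ one has $\tau(\overline{w})>0$, while $\Xi(\overline{w})\neq 0$, which forces $\tau(\overline{w})\Xi(\overline{w})\neq 0$, contradicting the equilibrium condition. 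Hence $\Xi(\overline{w})=0$, and the preceding proposition again gives that $\overline{w}$ solves \eqref{GIMVIP}.

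There is no real obstacle here; the corollary is a direct consequence of the proposition combined with the positivity assumption on $\tau$ outside $\mathrm{Zer}(\Xi)$. The only thing worth flagging explicitly in the write-up is that the assumption $\tau(w)>0$ for all $w\in\R^d\setminus\mathrm{Zer}(\Xi)$ is precisely what is needed to prevent spurious equilibria from being introduced by the scaling, so that the equilibrium set of \eqref{hdl2} coincides exactly with $\mathrm{Zer}(\Xi)$ and, via the proposition, with the solution set of \eqref{GIMVIP}.
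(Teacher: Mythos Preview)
Your proposal is correct and matches the paper's approach: the paper simply states that the corollary is a direct consequence of the preceding proposition, and your argument spells out precisely why, using the positivity assumption $\tau(w)>0$ for $w\notin\mathrm{Zer}(\Xi)$ to ensure that the equilibrium set of \eqref{hdl2} coincides with $\mathrm{Zer}(\Xi)$.
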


The following assumption is introduced to facilitate the analysis of finite-time and fixed-time stability of the GIMVIP \eqref{GIMVIP}.

\noindent ({\bf A})  Let $h~:  \R^d \longrightarrow \R^d $  be a Lipschitz continuous single-valued mapping with Lipschitz constant $\alpha$. Let   \(F: \R^d \longrightarrow \R^d\) be a single-valued mapping. Let $ g~: \Omega \longrightarrow \mathbb{R}\cup \{+\infty\}$  be a proper, convex, and lower semicontinuous function. Assume further that  
\begin{enumerate}
    \item [(i)] $h$ is $\lambda$-monotone on $\R^d$, and $(F, h)$ is $\mu$-strongly monotone on $\R^d$. 
    \item[(ii)] $F$ is $\rho$-cocoercive with constant $\rho>0.$
    \item [(iii)]   \(\sqrt{\beta^2+\alpha^2-2\mu}+\sqrt{1-2\lambda+\alpha^2}<1,\)
    where $\mu<\frac{\alpha^2+\beta^2}{2}, \lambda < \frac{1+\alpha^2}{2}, \beta=\frac{1}{\rho}$.
    
    \item[(iv)] $\rho>\sqrt{\alpha^2+\beta^2-2\mu}$. 
\end{enumerate}

\noindent{\bf Notation:} From now on we  denote $$\Gamma:=\sqrt{\beta^2+\alpha^2-2\mu}+\alpha,$$ and      $$\Lambda=\sqrt{\alpha^2+\beta^2-2\mu}.$$
We now present an example to show that there exist amppings and functions that satisfy Assumption ({\bf A}).  This example is adapted from the one in \cite{JIT}. 
\begin{example}\label{ex1} 
Let $d=1, \Omega=[0,+\infty)$. We define $h(w)=\frac{w}{2}, F(w)=\frac{3w}{4}, g(x)=w^2+2w+1$ for all $w\in \R$.  Then it is easy to see that $h$ is $\frac{1}{2}$-Lipschit continuous and $\frac{1}{2}$-strongly monotone, while $F$ is $\frac{3}{4}$-Lipschitz continuous and $\frac{4}{3}$-cocoercive, and $(F, h)$ is \(\frac{3}{8}\)-strongly monotone couple.  A straightforward verification shows that the assumpetion ({\bf A}) is satisfied for mappings $F, h, g$ with $\alpha=\frac{1}{2}=\lambda, \beta=\frac{3}{4}, \rho=\frac{4}{3}, \mu=\frac{3}{8}$. 
\end{example}
The theorem below presents a sufficient condition for the existence and uniqueness of solutions to the GIMVIP \eqref{GIMVIP}.

 \begin{theorem}\cite{JIT}\label{unique sol}
        Let $ h~:  \R^d \longrightarrow \R^d $  be a Lipschitz continuous single-valued mapping with Lipschitz constant $\alpha$. Let   \(F: \R^d \longrightarrow \R^d\) be a  $\beta-$ Lipschitz continuous single-valued mapping. Let $ g~: \Omega \longrightarrow \mathbb{R}\cup \{+\infty\}$  be a proper, convex, and lower semicontinuous function. If Conditions (i) and (iii) in Assumption ({\bf A}) are satisfied, then the GIMVIP \eqref{GIMVIP} has a unique solution.
    \end{theorem}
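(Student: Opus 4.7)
The plan is to recast the GIMVIP as a fixed-point equation and apply the Banach contraction principle. By Corollary~\ref{mlh1}, $\overline{w}$ solves \eqref{GIMVIP} exactly when $F(\overline{w}) = B(\overline{w})$, so I would equivalently look for a fixed point of
\[
T(w) := w - F(w) + B(w)
\]
on $\R^d$. Existence and uniqueness then follow simultaneously from Banach's theorem once $T$ is shown to be a strict contraction with factor strictly below $1$.

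To bound $\|T(w) - T(v)\|$, the strategy is to insert $\pm(h(w)-h(v))$ and regroup so that $T(w)-T(v)$ splits into
\[
\bigl[(w-v)-(h(w)-h(v))\bigr] + \bigl[(B(w)-B(v)) - \bigl((F(w)-h(w))-(F(v)-h(v))\bigr)\bigr].
\]
For the first bracket, expanding the squared norm and applying $\lambda$-monotonicity of $h$ together with its Lipschitz constant $\alpha$ yields the bound $\sqrt{1-2\lambda+\alpha^2}\,\|w-v\|$. For the second bracket, the key observation is that the proximal operator is firmly nonexpansive (part~(ii) of the preceding lemma), which is equivalent to $I-\prox^{\gamma g}_{\Omega}$ being nonexpansive; therefore the bracket is bounded in norm by $\|(F(w)-F(v))-(h(w)-h(v))\|$. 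Expanding that squared norm and using the Lipschitz constants $\beta,\alpha$ together with $\mu$-strong monotonicity of the couple $(F,h)$ gives $\sqrt{\beta^2+\alpha^2-2\mu}\,\|w-v\|$.

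Summing the two estimates, I obtain
\[
\|T(w)-T(v)\| \le \Bigl(\sqrt{1-2\lambda+\alpha^2}+\sqrt{\beta^2+\alpha^2-2\mu}\Bigr)\|w-v\|,
\]
which is strictly contractive by Assumption~(\textbf{A})(iii). The Banach fixed-point theorem then produces a unique $\overline{w}\in\R^d$ with $T(\overline{w})=\overline{w}$, which by Corollary~\ref{mlh1} is the unique solution of \eqref{GIMVIP}.

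The delicate point I expect will be the treatment of the second bracket: applying the triangle inequality together with only the nonexpansiveness of $\prox^{\gamma g}_{\Omega}$ would double the coefficient $\sqrt{\beta^2+\alpha^2-2\mu}$ and could overshoot the threshold in~(iii). The correct grouping exploits firm nonexpansiveness of the proximal operator, equivalently the nonexpansiveness of $I-\prox^{\gamma g}_{\Omega}$, which is what produces a contraction constant matching exactly the one prescribed by condition~(iii).
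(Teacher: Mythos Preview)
Your argument is correct. The paper does not supply its own proof of this theorem; it merely cites \cite{JIT}, so there is nothing in the present paper to compare against. The Banach fixed-point route you outline---with the key step being the use of firm nonexpansiveness of $\prox_\Omega^{\gamma g}$ (equivalently, nonexpansiveness of $I-\prox_\Omega^{\gamma g}$) to control the second bracket without doubling the coefficient $\sqrt{\beta^2+\alpha^2-2\mu}$---is exactly the standard approach and is what one would expect in the cited reference. Your closing remark about why the naive triangle inequality plus mere nonexpansiveness of $\prox$ would fail is on point and is the only subtle part of the proof.
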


The following lemma lists some useful inequalities that are needed in the sequel.

\begin{lemma} \label{bdt}Let $h, F~:  \R^d \longrightarrow \R^d$ and  $ g~: \Omega \longrightarrow \mathbb{R}\cup \{+\infty\}$ be such that assumption ({\bf A}) is satisfied. Let $\overline{w}$ be the unique solution to the GIMVIP \eqref{GIMVIP}. Then the following holds.
\begin{itemize}
    \item [(i)] $\|\Xi(w_1) -\Xi(w_2)\| \leq \Gamma \|w_1-w_2\|$ for all $w_1, w_2\in \R^d$.
    \item[(ii)]  $\|B(w)-B\left(\overline{w}\right)\|\leq \big(\sqrt{\beta^2+\alpha^2-2\mu}\big)  \|w-\overline{w}\| =\Lambda \|w-\overline{w}\|$ for all $w\in \R^d$.  
    \item[(iii)] $\left(\rho-\Lambda \right)\|w-\overline{w}\| \leq \|\Xi(w)\|\leq \left(\sqrt{\beta^2-2\mu+\alpha^2}+\alpha\right) \|w-\overline{w}\|=\Gamma \|w-\overline{w}\|$ for all $w\in \R^d$.      
    \item[(iv)] $\langle w-\overline{w}, \Xi(w) \rangle      
        \geq (\rho-\Lambda) \|w-\overline{w}\|^2 >0 \ \forall w\in \R^d, \ w\neq \overline{w}$.
\end{itemize}
 
\end{lemma}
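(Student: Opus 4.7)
The plan is to establish the four items in the order (ii), (i), (iii), (iv), since (ii) furnishes a clean Lipschitz estimate for $B$ from which the other bounds follow by triangle-inequality manipulations.

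For (ii), I would start from the nonexpansiveness of $\prox_\Omega^{\gamma g}$ (part (iii) of the $\prox$-lemma), which gives $\|B(w)-B(\overline{w})\|^2 \le \|(F(w)-h(w))-(F(\overline{w})-h(\overline{w}))\|^2$. Expanding the right-hand side as $\|F(w)-F(\overline{w})\|^2 - 2\langle F(w)-F(\overline{w}),\,h(w)-h(\overline{w})\rangle + \|h(w)-h(\overline{w})\|^2$ and invoking the three ingredients of Assumption~(\textbf{A})---namely $F$ is $\beta$-Lipschitz (as a consequence of $\rho$-cocoercivity, with $\beta=1/\rho$), $(F,h)$ is $\mu$-strongly monotone, and $h$ is $\alpha$-Lipschitz---collapses the bound to $(\beta^2+\alpha^2-2\mu)\|w-\overline{w}\|^2 = \Lambda^2\|w-\overline{w}\|^2$.

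For (i), I would rewrite $\Xi(w_1)-\Xi(w_2) = \bigl[(u_1-\prox_\Omega^{\gamma g}u_1) - (u_2-\prox_\Omega^{\gamma g}u_2)\bigr] + \bigl[h(w_1)-h(w_2)\bigr]$ with $u_i := F(w_i)-h(w_i)$. Firm nonexpansiveness of the proximal operator (part (ii) of the $\prox$-lemma) implies that $I-\prox_\Omega^{\gamma g}$ is itself nonexpansive, so the first bracket is dominated by $\|u_1-u_2\| \le \Lambda\|w_1-w_2\|$ via the same expansion as in Step (ii). The second bracket is bounded by $\alpha\|w_1-w_2\|$, and the triangle inequality delivers $\Gamma=\Lambda+\alpha$. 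Taking $w_2=\overline{w}$ together with $\Xi(\overline{w})=0$ immediately gives the upper bound of (iii). For the lower bound, I would decompose $\Xi(w) = [F(w)-F(\overline{w})] - [B(w)-B(\overline{w})]$, apply the reverse triangle inequality together with (ii) to obtain $\|\Xi(w)\| \ge \|F(w)-F(\overline{w})\|-\Lambda\|w-\overline{w}\|$, and then bound $\|F(w)-F(\overline{w})\|$ from below by $\rho\|w-\overline{w}\|$. For (iv), take the inner product of the same decomposition with $w-\overline{w}$: the first piece yields $\langle F(w)-F(\overline{w}),\,w-\overline{w}\rangle \ge \rho\|w-\overline{w}\|^2$ (by the same strong-monotonicity-type lower bound), while the second is dominated by $\Lambda\|w-\overline{w}\|^2$ via Cauchy--Schwarz and (ii); subtracting produces $(\rho-\Lambda)\|w-\overline{w}\|^2$, with strict positivity coming from Assumption~(\textbf{A})(iv), i.e.\ $\rho>\Lambda$.

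The main obstacle I anticipate lies in (iii) and (iv): the lower bound of the form $\|F(w)-F(\overline{w})\|\ge \rho\|w-\overline{w}\|$ does not follow from $\rho$-cocoercivity of $F$ alone, since cocoercivity by itself yields only the upper Lipschitz estimate $\|F(w)-F(\overline{w})\|\le \beta\|w-\overline{w}\|$ with $\beta=1/\rho$. The argument must therefore exploit an additional strong-monotonicity-type property of $F$, either directly from the cocoercivity constant as interpreted in Assumption~(\textbf{A}) or by piecing it together from the $\mu$-strong monotonicity of the couple $(F,h)$ and the Lipschitz bound on $h$. Calibrating this lower bound precisely, so that the constant $(\rho-\Lambda)$ emerges cleanly, is where I expect the bulk of the care to go; once it is in hand, items (iii) and (iv) are completed by routine triangle/Cauchy--Schwarz manipulations against the nonexpansiveness of $B$ from (ii).
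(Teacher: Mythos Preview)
Your plan for (ii), (iii), and (iv) mirrors the paper's own proof almost verbatim: the paper expands $\|B(w)-B(\overline{w})\|^2$ exactly as you describe for (ii); for the lower bound in (iii) it writes $\|\Xi(w)\|=\|F(w)-F(\overline{w})-(B(w)-B(\overline{w}))\|\ge \rho\|w-\overline{w}\|-\Lambda\|w-\overline{w}\|$; and for (iv) it takes the inner product of the same decomposition with $w-\overline{w}$, invoking ``the $\rho$-cocoercive property of $F$'' to get $\langle w-\overline{w},F(w)-F(\overline{w})\rangle\ge \rho\|w-\overline{w}\|^2$ and then Cauchy--Schwarz plus (ii) for the $B$-term. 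For (i) and the upper bound in (iii) the paper simply cites~\cite{JIT}; your route via the nonexpansiveness of $I-\prox_\Omega^{\gamma g}$ is a clean self-contained alternative that yields the same constant $\Gamma=\Lambda+\alpha$.

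The obstacle you flag is real, and the paper does \emph{not} resolve it: it asserts both $\|F(w)-F(\overline{w})\|\ge \rho\|w-\overline{w}\|$ and $\langle w-\overline{w},F(w)-F(\overline{w})\rangle\ge \rho\|w-\overline{w}\|^2$ directly ``from cocoercivity,'' with no auxiliary argument piecing this together from the $(F,h)$ strong monotonicity or anything else. As you correctly note, $\rho$-cocoercivity in the standard sense gives only the upper Lipschitz bound $\|F(w)-F(v)\|\le \rho^{-1}\|w-v\|$, not a lower bound or $\rho$-strong monotonicity, so this step is a genuine gap (indeed, in the paper's own Example~\ref{ex1} with $F(w)=\tfrac{3}{4}w$ and $\rho=\tfrac{4}{3}$, the claimed lower bound $\|F(w)-F(\overline{w})\|\ge \rho\|w-\overline{w}\|$ fails). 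In short: your proposal matches the paper's argument, and the missing ingredient you anticipate is missing from the paper as well---the stated cocoercivity hypothesis is being used as though it implied $\rho$-strong monotonicity of $F$.
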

\begin{proof} Parts (i) and the second part of (iii) are proved in \cite{JIT}.

\noindent  (ii)  We have     \begin{align*}
        &\|B(w)-B\left(\overline{w}\right)\|\leq  \|F(w)-F(\overline{w})-\big(h(w)-h(\overline{w})\big)\|\\
        \leq & \sqrt{\|F(w)-F(\overline{w})\|^2+\|h(w)-h(\overline{w})\|^2-2\langle F(w)-F(\overline{w}), h(w)-h(\overline{w})\rangle} \\
        \leq & \sqrt{\beta^2\|w-\overline{w}\|^2+\alpha^2\|w-\overline{w}\|^2-2\mu \|w-\overline{w}\|^2} =\big(\sqrt{\alpha^2+\beta^2-2\mu}\big)\|w-\overline{w}\|.
    \end{align*} 

\noindent    (iii) We now prove the first inequality of Part (iii).  We have 
     \begin{align*}
         \|\Xi(w)\|= &\|\Xi(w)-A\left(\overline{w}\right)\|\\
         = &\|F(w)-F(\overline{w})-\big(B(w)-B\left(\overline{w}\right)\big)\| \geq \rho\|w-\overline{w}\|-\sqrt{\alpha^2+\beta^2-2\mu}\|w-\overline{w}\|\\
         \geq &(\rho-\Lambda)\|w-\overline{w}\|. 
     \end{align*}
 
 \noindent (iv)   Using the $\rho$-cocoercive property of $F$ and  Part (ii),  one has, for all $w\in \R^d$,
\begin{align*}
	&\langle w-\overline{w}, \Xi(w) \rangle\\
     =& \langle w-\overline{w}, F(w)-F\left(\overline{w}\right)-(B(w)-B\left(\overline{w}\right) \rangle\\
     =& \langle w-\overline{w}, F(w)-F\left(\overline{w}\right)\rangle -\langle w-\overline{w}, B(w)-B\left(\overline{w}\right) \rangle\\
    \geq  & \rho\|w-\overline{w}\|^2 - \|w-\overline{w}\|\|B(w)-B\left(\overline{w}\right)\|\\
    \geq  &\rho\|w-\overline{w}\|^2 - \Lambda\|w-\overline{w}\|^2\\  
     = & \left (\rho-\Lambda \right ) \|w-\overline{w}\|^2 >0. 
`	\end{align*}
   
\end{proof}


\section{Finite-time stability  analysis}\label{finite stability}
In this section, we will analyze the finite-time stability for the GIMVIP \eqref{GIMVIP}. By doing so, we propose a new first-order dynamical system associated with problem \eqref{GIMVIP} such that a solution of \eqref{GIMVIP} becomes an equilibrium point of the dynamical system. Under mild conditions for the parameters, we demonstrate that the proposed dynamical system is finite-time stable.

In order to obtain a finite-time stability of a solution of the GIMVIP \eqref{GIMVIP}, we now present a novel first-order dynamical system associated with the GIMVIP \eqref{GIMVIP}.  Our proposed dynamical system is as follows: 
\begin{equation}\label{newydynamicalsystem-finite} 
\dot{w}=-\tau  \dfrac{\Xi(w)}{\|\Xi(w)\|^{\frac{k-2}{k-1}}}, 
\end{equation} 
where 
$\tau>0$  is a scalar tuning gain, and $k>2$ is a design parameter. 

\noindent {\bf Convention:} For $w\in \R^d$ such that $\Xi(w)=0$, and for $k>2$ we use the following identity  
\begin{equation*}
   \left\|  \dfrac{\Xi(w)}{\|\Xi(w)\|^{\frac{k-2}{k-1}}}\right\|= \|\Xi(w)\|^{\frac{1}{k-1}}=0.
\end{equation*}
\begin{remark}
    When $k=2$, the dynamical system \eqref{newydynamicalsystem-finite} reduces to the one studied in \cite{JIT}, which was used to establish asymptotic and exponential convergence.
\end{remark}
The following result establishes the connection between the equilibrium points of the previously mentioned dynamical system and those of \eqref{hdl2}.

\begin{proposition}\label{pro3-finite}
A point  $\overline{w}\in \R^d$ is an equilibrium point of  \eqref{newydynamicalsystem-finite} if and only of it is also an equilibirum point of \eqref{hdl2}. 
\end{proposition}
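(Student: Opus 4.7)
The plan is to show that both dynamical systems have exactly the same equilibrium set, namely $\mathrm{Zer}(\Xi)$, and then invoke the definition of equilibrium point in each direction. Concretely, I would first recall that, by definition, a point $\overline{w}\in\R^d$ is an equilibrium of \eqref{hdl2} precisely when $\tau(\overline{w})\,\Xi(\overline{w})=0$. Using the standing assumption that $\tau(w)>0$ for every $w\in \R^d\setminus \mathrm{Zer}(\Xi)$, I would argue by contraposition: if $\Xi(\overline{w})\neq 0$ then $\tau(\overline{w})>0$ and hence $\tau(\overline{w})\Xi(\overline{w})\neq 0$, while if $\Xi(\overline{w})=0$ then trivially $\tau(\overline{w})\Xi(\overline{w})=0$ (even without needing to evaluate $\tau$, since the convention on the right-hand side already returns the zero vector). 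Thus the equilibrium set of \eqref{hdl2} coincides with $\mathrm{Zer}(\Xi)$.

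Next I would do the analogous analysis for \eqref{newydynamicalsystem-finite}. A point $\overline{w}$ is an equilibrium of this system if and only if
\[
-\tau\,\frac{\Xi(\overline{w})}{\|\Xi(\overline{w})\|^{\frac{k-2}{k-1}}}=0.
\]
By the convention introduced immediately after \eqref{newydynamicalsystem-finite}, the norm of the left-hand side equals $\tau\,\|\Xi(\overline{w})\|^{1/(k-1)}$. Since $\tau>0$ and $k>2$, this quantity vanishes if and only if $\|\Xi(\overline{w})\|=0$, i.e.\ $\Xi(\overline{w})=0$. Therefore the equilibrium set of \eqref{newydynamicalsystem-finite} also coincides with $\mathrm{Zer}(\Xi)$, and combining the two characterizations yields the desired equivalence. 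There is no real obstacle here; the only subtlety worth stating explicitly is the use of the convention that makes $\Xi(\overline{w})/\|\Xi(\overline{w})\|^{(k-2)/(k-1)}$ well-defined at points where $\Xi(\overline{w})=0$, which is precisely what allows the ``if'' direction to go through without a separate continuity argument.
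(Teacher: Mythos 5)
Your proposal is correct and follows essentially the same route as the paper: both arguments reduce each system's equilibrium condition to $\Xi(\overline{w})=0$, the key step being that the norm of the vector field in \eqref{newydynamicalsystem-finite} equals $\tau\,\|\Xi(\overline{w})\|^{1/(k-1)}$ with positive exponent $\tfrac{1}{k-1}$ for $k>2$. Your explicit use of the positivity of $\tau(\cdot)$ off $\mathrm{Zer}(\Xi)$ to handle the \eqref{hdl2} side is a small extra detail the paper leaves implicit, but it is not a different argument.
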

\begin{proof} A point  $\overline{w}\in \R^d$ is an equilibirum point of  the dynamical system  \eqref{newydynamicalsystem-finite} if and only if 
$$\dfrac{\Xi\left(\overline{w}\right)}{\|\Xi\left(\overline{w}\right)\|^{\frac{k-2}{k-1}}}= 0 \iff   \dfrac{\|\Xi\left(\overline{w}\right)\|}{\|\Xi\left(\overline{w}\right)\|^{\frac{k-2}{k-1}}} =0.$$  This equation is equivalent to the following  
\begin{equation}\label{equ80}
	\|\Xi\left(\overline{w}\right)\|^{1-\frac{k-2}{k-1}}=0.
\end{equation}
Because $1-\frac{k-2}{k-1}=\frac{1}{k-1}>0$ for all $k>2$,  equality \eqref{equ80} holds if and only if  $$\Xi\left(\overline{w}\right)=0,$$ or $$F(\overline{w})=B\left(\overline{w}\right).$$  This is  equation holds if and only if  $\overline{w}$ is an equilibrium point of \eqref{hdl2}.
This completes the proof.
\end{proof}

From Proposition \ref{pro3-finite}, we derive the following direct consequence:
\begin{corollary}\label{pro4} 
	Let $h, F~:  \R^d \longrightarrow \R^d$ and  $ g~: \Omega \longrightarrow \mathbb{R}\cup \{+\infty\}$ be a proper, convex, .s.c function.  Then \(\overline{w}\in \R^d\) is a solution to GIMVIP \eqref{GIMVIP} if and only if it is an equilibrium point of the dynamical system \eqref{newydynamicalsystem-finite}. 
\end{corollary}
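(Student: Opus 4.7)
The plan is very short: this corollary is a direct transitivity argument chaining two facts that are already on the table. By Corollary~\ref{mlh1}, a point $\overline{w}\in\R^d$ solves the GIMVIP~\eqref{GIMVIP} if and only if it is an equilibrium point of the nominal dynamical system~\eqref{hdl2}, i.e.\ $\Xi(\overline{w})=0$. By Proposition~\ref{pro3-finite}, equilibrium points of the newly introduced system~\eqref{newydynamicalsystem-finite} are exactly equilibrium points of~\eqref{hdl2}. So my proof would simply state these two facts and conclude the equivalence by composing them.

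Concretely, I would write two short lines: ``$\overline{w}$ solves GIMVIP~\eqref{GIMVIP} $\iff$ $\overline{w}$ is an equilibrium of~\eqref{hdl2}'' (invoking Corollary~\ref{mlh1}), and then ``$\overline{w}$ is an equilibrium of~\eqref{hdl2} $\iff$ $\overline{w}$ is an equilibrium of~\eqref{newydynamicalsystem-finite}'' (invoking Proposition~\ref{pro3-finite}). Chaining the two biconditionals gives the claim. No hypothesis on $F$, $h$, $g$ beyond what the two cited results already require needs to be added, since Proposition~\ref{pro3-finite} only used the convention that $\|\Xi(w)\|^{1/(k-1)}=0$ when $\Xi(w)=0$ together with $k>2$.

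Since this is a pure transitivity argument, there is essentially no obstacle; the only thing to double-check is that the convention for the right-hand side of~\eqref{newydynamicalsystem-finite} at points where $\Xi$ vanishes is consistently used, so that ``equilibrium point of~\eqref{newydynamicalsystem-finite}'' does mean $\Xi(\overline{w})=0$ and not something weaker. This has already been dealt with in the proof of Proposition~\ref{pro3-finite}, so the corollary follows immediately.
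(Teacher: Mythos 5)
Your proposal is correct and matches the paper's proof exactly: the paper likewise obtains the corollary by chaining Corollary~\ref{mlh1} with Proposition~\ref{pro3-finite}, and your remark about the convention at zeros of $\Xi$ being handled inside Proposition~\ref{pro3-finite} is the right observation. Nothing further is needed.
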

\begin{proof}
It follows directly from  Proposition \ref{pro3-finite} and Corollary \ref{mlh1}.  
\end{proof}
The next main result gives the finite-time stability of the dynamical system \eqref{newydynamicalsystem-finite}.
\begin{theorem}\label{tr2-fin} Let $ h~:  \R^d \longrightarrow \R^d $  be a Lipschitz continuous single-valued mapping with Lipschitz constant $\alpha$. Let   \(F: \R^d \longrightarrow \R^d\) be a single-valued mapping. Let $ g~: \Omega \longrightarrow \mathbb{R}\cup \{+\infty\}$  be a proper, convex, and lower semicontinuous function. Assume that  Conditions in Assumption  ({\bf A}) hold. 
Then,  the solution \(\overline{w}\in \R^d\) of the GIMVIP  \eqref{GIMVIP} is a globally finite-time stable equilibrium point of \eqref{newydynamicalsystem-finite} with a settling time 
$$T(w(0))\leq T_{\max}=\dfrac{1}{2^{1-p}}\dfrac{(\|w(0)-\overline{w}\|)^{2(1-p)}}{K(1-p)}$$
for some constants $K>0$ and $p \in (0.5, 1)$. 
\end{theorem}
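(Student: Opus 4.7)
The natural plan is to take the quadratic Lyapunov function $V(w)=\tfrac12\|w-\overline{w}\|^2$ and bound $\dot V$ from above using the inequalities already collected in Lemma \ref{bdt}. Along a trajectory of \eqref{newydynamicalsystem-finite},
\[
\dot V(w)=\langle w-\overline{w},\dot w\rangle
=-\tau\,\frac{\langle w-\overline{w},\Xi(w)\rangle}{\|\Xi(w)\|^{\frac{k-2}{k-1}}},
\]
so the task reduces to bounding the numerator below and the denominator above in terms of $\|w-\overline{w}\|$, and then converting that back into a power of $V$.

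For the numerator, Lemma \ref{bdt}(iv) gives $\langle w-\overline{w},\Xi(w)\rangle\ge (\rho-\Lambda)\|w-\overline{w}\|^2$, where $\rho-\Lambda>0$ by Assumption (A)(iv). For the denominator, Lemma \ref{bdt}(iii) yields $\|\Xi(w)\|\le \Gamma\|w-\overline{w}\|$, hence $\|\Xi(w)\|^{(k-2)/(k-1)}\le \Gamma^{(k-2)/(k-1)}\|w-\overline{w}\|^{(k-2)/(k-1)}$. Combining these and simplifying the exponent $2-\tfrac{k-2}{k-1}=\tfrac{k}{k-1}$ gives
\[
\dot V(w)\le -\tau\,\frac{\rho-\Lambda}{\Gamma^{\frac{k-2}{k-1}}}\,\|w-\overline{w}\|^{\frac{k}{k-1}}.
\]
Since $\|w-\overline{w}\|=\sqrt{2V(w)}$, this is of the form $\dot V(w)\le -K\,V(w)^{p}$ with
\[
p=\frac{k}{2(k-1)},\qquad K=\tau\,\frac{\rho-\Lambda}{\Gamma^{\frac{k-2}{k-1}}}\,2^{\frac{k}{2(k-1)}}.
\]
For $k>2$ one checks $p=\tfrac12+\tfrac{1}{2(k-1)}\in(0.5,1)$, and $K>0$ by Assumption (A)(iv).

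Finite-time stability then follows from Theorem \ref{lm1-finite} applied with $D=U=\R^d$, because the bound on $\dot V$ holds globally; in particular \(\overline{w}\) is globally finite-time stable. The explicit settling-time estimate from that theorem gives
\[
T(w(0))\le \frac{V(w(0))^{1-p}}{K(1-p)}
=\frac{1}{2^{1-p}}\,\frac{\|w(0)-\overline{w}\|^{2(1-p)}}{K(1-p)},
\]
which is exactly the expression in the statement. Uniqueness of $\overline{w}$ is guaranteed by Theorem \ref{unique sol}, and the equivalence between solutions of GIMVIP and equilibria of \eqref{newydynamicalsystem-finite} is given by Corollary \ref{pro4}.

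The only subtle point is the behavior of the vector field at the equilibrium: the map $w\mapsto \Xi(w)/\|\Xi(w)\|^{(k-2)/(k-1)}$ is continuous (via the convention $\|\Xi(w)\|^{1/(k-1)}=0$ when $\Xi(w)=0$) but not Lipschitz near $\overline{w}$, so classical existence yields only local solutions. I expect this to be the main technical hurdle, and it is handled in the usual way for finite-time stable systems: the Lyapunov inequality $\dot V\le -KV^{p}$ forces trajectories to remain bounded, which extends them to all of $[0,\infty)$, and any trajectory reaching $\overline{w}$ at its settling time is continued by $w(t)\equiv\overline{w}$. Combined with the Lyapunov estimate above, this yields the claimed global finite-time stability with the stated settling time.
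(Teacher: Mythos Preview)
Your proof is correct and follows essentially the same approach as the paper: take the quadratic Lyapunov function $V(w)=\tfrac12\|w-\overline{w}\|^2$, bound $\dot V$ from above using Lemma~\ref{bdt}(iv) for the numerator and Lemma~\ref{bdt}(iii) for the denominator, and then invoke Theorem~\ref{lm1-finite}. The only difference is that you (correctly) use the upper bound $\|\Xi(w)\|\le\Gamma\|w-\overline{w}\|$ to control the denominator, whereas the paper's displayed constant uses $(\rho-\Lambda)$ in place of $\Gamma$; since the statement only asserts the existence of some $K>0$ and $p\in(0.5,1)$, this discrepancy does not affect the conclusion.
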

\begin{proof} 

We define  the Lyapunov function \(V : \R^d \rightarrow \R\) as follows:
\[V(w):=\dfrac{1}{2}\|w-\overline{w}\|^2.\]
Differentiating  with respect to time of  \(V\) along the solution of \eqref{newydynamicalsystem-finite}, starting from any initial condition \(w(0)\in \R^d\setminus\{\overline{w}\}\), where $\overline{w}\in \mbox{\rm Zer}(\Xi)$ is unique with $\Xi(w)=\Xi(w), \ w\in \R^d$,   yields:
\begin{align*}
	\dot{V}=&-\left\langle w-\overline{w}, \tau  \dfrac{\Xi(w)}{\|\Xi(w)\|^{\frac{k-2}{k-1}}} \right\rangle \notag\\
	=& -\tau   \dfrac{\left\langle w-\overline{w}, \Xi(w) \right\rangle} {\|\Xi(w)\|^{\frac{k-2}{\gamma-1}}}. 
\end{align*}
 Part (iv) of Lemma \ref{bdt} yields 
\begin{align*}
	\dot{V} &\leq  -\tau \left(\rho-\Lambda\right)   \dfrac{\|w-\overline{w}\|^2} {\|\Xi(w)\|^{\frac{k-2}{k-1}}} 
\end{align*}
for all $w\in \R^d\setminus \{\overline{w}\}$. Using Part (iii) of Lemma \ref{bdt}, one has  
\begin{align}\label{eq16-finite1}
	\dot{V} \leq &  -\tau \dfrac{ \left(\rho-\Lambda\right)}{\left(\rho-\Lambda\right)^{\frac{k-2}{k-1}} }  \dfrac{\|w-\overline{w}\|^2} { \|w-\overline{w}\|^{\frac{k-2}{k-1}}}\\ \notag
    =&-\tau \left(\rho-\Lambda\right)^{\frac{1}{k-1}} \|w-\overline{w}\|^{\frac{k}{k-1}}.\label{eq16-finite1} 
\end{align}
Setting \(C:= \tau (\rho-\Lambda)^{\frac{1}{k-1}}\). Since $\rho>\Lambda, \tau>0$ it holds that   $C>0$ and inequality  \eqref{eq16-finite1} can be read as: 
\begin{align*}
	\dot{V} \leq -C \cdot  \|w-\overline{w}\|^{\frac{k}{k-1}} =- C\cdot 2^{\frac{k}{2(k-1)}} \left(\dfrac{1}{2}\|w-\overline{w}\|^2\right)^{\frac{k}{2(k-1)}}.
\end{align*}
By applying  Theorem \ref{lm1-finite} with noting that $0.5<r:=\frac{1}{2} \, \frac{k}{k-1}<1$ for all $k>2$ and $M:=C\cdot 2^{r}>0$ we obtain the conclusion. The proof is completed.
\end{proof}

 \begin{remark}\label{connection finite} Since the GIMVIP \eqref{GIMVIP} encompasses several well-known problems as special cases, the above result yields finite-time stability for these problems by choosing appropriate mappings $F$, $h$, and the function $g$. More precisely, when $h$ is the identity mapping, the result guarantees finite-time stability for IMVIPs; if, in addition, $g \equiv 0$, it applies to IVIPs. Furthermore, when $F$ is the identity mapping, finite-time stability is obtained for MVIPs, which further reduces to VIPs when $g \equiv 0$.

\end{remark}

\section{ Fixed-time and predefined-time stability  analysis} 	\label{sec3}

In this section, we investigate the fixed-time stability of solutions to the GIMVIP \eqref{GIMVIP}. For this purpose, a new dynamical system is introduced. The solutions of the generalized inverse mixed variational inequality problem are then characterized via the equilibrium points of this system. Finally, we prove the fixed-time convergence of the proposed dynamical system.


To establish the fixed-time stability of solutions to the generalized inverse mixed variational inequality problem \eqref{GIMVIP}, we construct the following dynamical system:

\begin{equation}\label{newydynamicalsystem} 
	\dot{w}=-\frac{G_d}{T_d}\tau(w)\Xi(w),
\end{equation} 
where 
\begin{equation}\label{dnrho} 
	\tau(w):=\begin{cases} 
		a_1\dfrac{1}{\|\Xi(w)\|^{1-k_1}}& +a_2\dfrac{1}{\|\Xi(w)\|^{1-k_2}}+a_3\dfrac{1}{\|\Xi(w)\|^{k_3}} \\
		& \mbox{ if } w\in  \R^d\setminus {\rm Zer}(\Xi),\\
		0 & \mbox{otherwise}, 
	\end{cases}
\end{equation} 
with  \(a_1, a_2>0, a_3> 0, k_1\in (0, 1)\), $k_3\geq 0$ and \(k_2>1\).
\begin{remark}
    When \(F\) is the identity mapping and \(k_3 = 0\), the dynamical system~\eqref{newydynamicalsystem} reduces to the system proposed in~\cite{Zheng}, which was used to establish predefined-time stability for MVIPs. Likewise, when \(F\) is the identity mapping and \(k_3 = 1\), the dynamical system~\eqref{newydynamicalsystem} coincides with the one studied in~\cite{XXJU}, where fixed-time stability for MVIPs was established.

\end{remark}
\begin{proposition}\label{pro3}
    A point $ \overline{w}\in  \R^d$ is an equilibrium point of \eqref{newydynamicalsystem}-\eqref{dnrho} if and only if it is also an equilibrium point of \eqref{hdl2}. 
\end{proposition}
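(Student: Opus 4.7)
The plan is to prove the equivalence by directly exploiting the piecewise definition of $\tau$ in \eqref{dnrho}, together with the observation that equilibrium points of \eqref{hdl2} coincide with $\mathrm{Zer}(\Xi)$ (since $\tau(w)>0$ whenever $w\notin\mathrm{Zer}(\Xi)$ in that system). Thus the statement reduces to showing that $\overline{w}$ is an equilibrium of \eqref{newydynamicalsystem}--\eqref{dnrho} if and only if $\Xi(\overline{w})=0$.

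For the easy direction, I would assume $\Xi(\overline{w})=0$. Then by the second case of the piecewise formula \eqref{dnrho} one has $\tau(\overline{w})=0$, so $-\frac{G_d}{T_d}\tau(\overline{w})\Xi(\overline{w})=0$, and $\overline{w}$ is an equilibrium of \eqref{newydynamicalsystem}. This branch is essentially definitional.

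For the nontrivial direction, assume $\overline{w}$ is an equilibrium of \eqref{newydynamicalsystem}, i.e.\ $\tau(\overline{w})\Xi(\overline{w})=0$, and argue by contradiction that $\Xi(\overline{w})\neq 0$ cannot hold. In that scenario $\overline{w}\in\R^d\setminus\mathrm{Zer}(\Xi)$, so $\tau(\overline{w})$ is given by the first branch of \eqref{dnrho}. I would then verify that each of the three summands is strictly positive using the parameter conditions: since $k_1\in(0,1)$, the exponent $1-k_1$ is positive and $a_1\|\Xi(\overline{w})\|^{-(1-k_1)}>0$; since $k_2>1$, the term $a_2\|\Xi(\overline{w})\|^{-(1-k_2)}=a_2\|\Xi(\overline{w})\|^{k_2-1}$ is positive; and since $a_3>0$ with $k_3\geq 0$, the final term $a_3\|\Xi(\overline{w})\|^{-k_3}$ is also positive. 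Hence $\tau(\overline{w})>0$, which combined with $\Xi(\overline{w})\neq 0$ gives $\tau(\overline{w})\Xi(\overline{w})\neq 0$, contradicting the equilibrium assumption. Therefore $\Xi(\overline{w})=0$, and by Corollary \ref{mlh1} (or simply the characterization of $\mathrm{Zer}(\Xi)$ as the equilibrium set of \eqref{hdl2}), $\overline{w}$ is an equilibrium of \eqref{hdl2}.

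The only step that requires any care is the sign analysis of the three terms in $\tau$: one must check that the exponents inherited from $k_1,k_2,k_3$ do not accidentally let $\tau(\overline{w})$ vanish on $\R^d\setminus\mathrm{Zer}(\Xi)$. Beyond that, the result is a bookkeeping consequence of the piecewise definition \eqref{dnrho} and Corollary \ref{mlh1}, so I do not anticipate any substantial obstacle.
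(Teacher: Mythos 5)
Your argument is correct and is essentially the paper's own proof, just written out in full: the paper simply observes that $\tau(w)=0$ if and only if $w\in\mathrm{Zer}(\Xi)$ and concludes from the definition \eqref{dnrho}, which is exactly the positivity check of the three summands (and the definitional converse) that you carry out explicitly. No gaps; your version merely makes the one-line observation detailed.
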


\begin{proof}
Because \(\tau(w)=0\) if and only if \(w\in  \text{\rm Zer}(\Xi)\), the conclusion follows from the definition \eqref{dnrho} of $\tau$. 
\end{proof}

By combining Corollary \ref{mlh1} and Proposition \ref{pro3}, we obtain a characterization of the solutions to the GIMVIP in terms of the equilibrium points of the associated dynamical system \eqref{newydynamicalsystem}.

\begin{proposition}\label{pro4-fixed}
	A point \(\overline{w}\in \R^d\) is a solution of the GIMVIP \eqref{GIMVIP} if and only if it is an equilibrium point of the dynamical system \eqref{newydynamicalsystem}. 
\end{proposition}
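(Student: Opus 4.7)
The plan is to observe that Proposition~\ref{pro4-fixed} is a direct concatenation of two equivalences already established in the excerpt, so I would structure the proof as a short chain rather than redo any substantive analysis. Specifically, Corollary~\ref{mlh1} gives the equivalence
\[
\overline{w}\ \text{solves GIMVIP}~\eqref{GIMVIP}\ \Longleftrightarrow\ \overline{w}\ \text{is an equilibrium of}~\eqref{hdl2},
\]
while Proposition~\ref{pro3} gives the equivalence
\[
\overline{w}\ \text{is an equilibrium of}~\eqref{newydynamicalsystem}\ \Longleftrightarrow\ \overline{w}\ \text{is an equilibrium of}~\eqref{hdl2}.
\]
Composing the two biconditionals immediately yields the claim.

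If I wanted to be slightly more self-contained, I would spell out the middle step: an equilibrium point of \eqref{newydynamicalsystem} is by definition a point at which the right-hand side vanishes, i.e.\ $\tau(\overline{w})\,\Xi(\overline{w})=0$ (the constant factor $G_d/T_d$ being irrelevant). By the definition of $\tau$ in \eqref{dnrho}, if $\overline{w}\notin\mathrm{Zer}(\Xi)$ then all three terms $a_1\|\Xi(\overline{w})\|^{k_1-1}$, $a_2\|\Xi(\overline{w})\|^{k_2-1}$, $a_3\|\Xi(\overline{w})\|^{-k_3}$ are strictly positive (since $a_1,a_2,a_3>0$), hence $\tau(\overline{w})\Xi(\overline{w})\neq 0$; conversely $\tau(\overline{w})=0$ by construction whenever $\overline{w}\in\mathrm{Zer}(\Xi)$. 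Therefore the equilibrium set of \eqref{newydynamicalsystem} coincides with $\mathrm{Zer}(\Xi)$, which in view of Corollary~\ref{mlh1} coincides with the solution set of \eqref{GIMVIP}.

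There is no real obstacle here: the substantive content (the fixed-point characterization via the proximal operator, and the sign analysis of $\tau$) has already been carried out in the earlier results the proposition is designed to package. The only thing one needs to be careful about is that $\tau(w)\Xi(w)$ is unambiguously zero on $\mathrm{Zer}(\Xi)$, which is guaranteed by the case split in \eqref{dnrho} (and consistent with the convention for the similar expression in Section~\ref{finite stability}). Consequently the proof reduces to one or two lines citing Corollary~\ref{mlh1} and Proposition~\ref{pro3}.
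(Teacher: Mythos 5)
Your proof is correct and matches the paper exactly: the paper also obtains Proposition~\ref{pro4-fixed} by simply combining Corollary~\ref{mlh1} with Proposition~\ref{pro3}, the latter resting on the observation that $\tau(w)=0$ if and only if $w\in\mathrm{Zer}(\Xi)$. Your optional expanded middle step just unfolds the one-line proof of Proposition~\ref{pro3} and adds nothing that needs checking.
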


The following proposition provides a sufficient condition for the existence of a solution to the system \eqref{newydynamicalsystem}.

\begin{proposition} \label{unique sol da sy}   Consider the dynamical system \eqref{newydynamicalsystem}. 
 If $\Xi: \mathbb{R}^{d} \rightarrow \mathbb{R}^{d}$ be a locally Lipschitz continuous vector field such that 
\begin{equation}  
\Xi\left(\overline{w}\right)=0 \text { and } \langle w-w^*, \Xi(w)\rangle>0, \forall w \in \mathbb{R}^{d} \backslash\left\{\overline{w}\right\}, \overline{w} \in \mathbb{R}^{d}, 
\end{equation}
 then the right-hand side of \eqref{newydynamicalsystem} is continuous for all $w \in \mathbb{R}^{n}$, and for any given initial condition, the solution of \eqref{newydynamicalsystem} exists and is uniquely determined for all $t \geq 0$.
\end{proposition}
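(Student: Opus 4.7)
The plan is to verify three things in sequence: continuity of the right-hand side of \eqref{newydynamicalsystem} on all of $\R^d$, local existence and uniqueness of solutions, and forward completeness, i.e.\ extension of solutions to $[0,+\infty)$.

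First, the strict positivity hypothesis forces $\mathrm{Zer}(\Xi) = \{\overline{w}\}$: any other zero of $\Xi$ would make the inner product $\langle w - \overline{w}, \Xi(w)\rangle = 0$, contradicting the strict inequality. Continuity of $\tau(w)\Xi(w)$ on $\R^d \setminus \{\overline{w}\}$ is immediate from the continuity of $\Xi$ and the fact that $\|\Xi(w)\|$ is positive there. At $\overline{w}$, I would check that $\tau(w)\Xi(w) \to 0$ using the bound
$$\|\tau(w)\Xi(w)\| = a_1\|\Xi(w)\|^{k_1} + a_2\|\Xi(w)\|^{k_2} + a_3\|\Xi(w)\|^{1-k_3},$$
each term vanishing as $w\to\overline{w}$ under the exponent conditions $k_1\in(0,1)$, $k_2>1$ (and the implicit requirement $k_3<1$ for the last term). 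This matches the convention $\tau(\overline{w})\Xi(\overline{w})=0$ built into \eqref{dnrho}.

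Next, on $\R^d \setminus \{\overline{w}\}$ the vector field is locally Lipschitz, since $\Xi$ is locally Lipschitz and $\|\Xi\|$ is locally bounded away from $0$ off its zero set; hence Picard--Lindel\"of gives local existence and uniqueness for every initial condition $w(0) \neq \overline{w}$. For $w(0) = \overline{w}$, the constant trajectory is trivially a solution; uniqueness there is the main technical point and requires a Lyapunov argument. Taking $V(w) = \tfrac12\|w - \overline{w}\|^2$, one computes along any solution
$$\dot V(w(t)) = -\tfrac{G_d}{T_d}\,\tau(w(t))\,\langle w(t) - \overline{w}, \Xi(w(t))\rangle \leq 0,$$
because $\tau\ge 0$ and the inner product is nonnegative. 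Hence, if $w(0)=\overline{w}$, then $0 \leq V(w(t)) \leq V(\overline{w}) = 0$, forcing $w(t) \equiv \overline{w}$ and thus coincidence with the constant trajectory. Gluing this with Picard uniqueness away from $\overline{w}$ yields uniqueness for every initial condition: two solutions with a common starting point must agree on the maximal interval before either reaches $\overline{w}$ by Picard, and must then both remain at $\overline{w}$ thereafter by the equilibrium-uniqueness argument.

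Finally, the inequality $\dot V \leq 0$ gives $\|w(t) - \overline{w}\| \leq \|w(0) - \overline{w}\|$ on the maximal existence interval $[0, T_{\max})$; since solutions stay bounded and the vector field is continuous, no finite-time blow-up can occur, and $T_{\max}=+\infty$ follows from the standard continuation theorem. The main obstacle is precisely the uniqueness at $\overline{w}$: the right-hand side is only continuous --- not Lipschitz --- at the equilibrium, so classical ODE theorems do not apply, and the strict-monotonicity/Lyapunov argument is essential for ruling out spurious non-constant trajectories emanating from $\overline{w}$.
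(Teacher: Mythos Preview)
Your argument is correct and considerably more detailed than the paper's own proof, which consists of a single sentence deferring to Lemma~3 of \cite{Zheng}. The decomposition you use---continuity on all of $\R^d$, Picard--Lindel\"of away from the equilibrium, a Lyapunov argument with $V=\tfrac12\|w-\overline{w}\|^2$ and $\dot V\le 0$ to force uniqueness at $\overline{w}$ where Lipschitz continuity fails, and boundedness of trajectories to rule out finite-time blow-up---is the standard route and is presumably what the cited reference carries out in its special case $k_3=0$.

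One point you rightly flag, which the paper glosses over: continuity of the term $a_3\|\Xi(w)\|^{1-k_3}$ at $\overline{w}$ requires $k_3<1$, whereas the surrounding text and \eqref{dnrho} only assume $k_3\ge 0$. For $k_3\ge 1$ the right-hand side of \eqref{newydynamicalsystem} is genuinely discontinuous at $\overline{w}$, so the proposition as stated needs this additional hypothesis; your proof is correct precisely under that implicit restriction.
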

\begin{proof}
   Using the same arguments as in the proof of Lemma 3 in \cite{Zheng}, we obtain the desired conclusion..
\end{proof}

The following lemma from \cite{Zheng} provides sufficient conditions for predefined-time stability.
\begin{lemma}\label{lmpredefined} \cite{Zheng}     
 Consider the system \eqref{JJS1}. If there exists a positive definite unbounded radial function $V: \mathbb{R}^{d} \rightarrow \mathbb{R}$, and  a user-defined parameter $T_{d}$ such that  the following conditions hold:\\
(i) $V(w(t))=0 \Leftrightarrow \dot{w}(t)=0$.\\
(ii) For any $V(w(t))>0$, there exist $c_1, c_2, c_3, r_1, r_2, T_{d}, G_{d}>0$, where $r_2>1$, and $0<r_1<1$, such that the following inequality holds:
\begin{equation}\label{pt9}
\dot{V}(w(t)) \leq-\frac{G_{d}}{T_{d}}\left(c_1 V^{r_1}(w(t))+ c_2V^{r_2}(w(t))+c_3 V(w(t))\right), 
\end{equation}
then the system \eqref{JJS1} reaches to fixed-time stability within the predefined time $T_{d}$, where 
\begin{equation}\label{pt10}
G_{d}=\frac{1}{c_3(1-r_1)} \ln \left(1+\frac{c_3}{c_1}\right)+\frac{1}{c_3(r_2-1)} \ln \left(1+\frac{c_3}{c_2}\right) 
\end{equation}

\end{lemma}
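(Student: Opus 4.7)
The plan is to reduce the claim to a scalar comparison argument for the Lyapunov function $V$ and then to carry out two explicit integrations over convenient regimes. Writing $\kappa := G_d/T_d$ and invoking a standard comparison principle, any solution of the differential inequality \eqref{pt9} satisfies $V(w(t)) \leq u(t)$ as long as $u(t)>0$, where $u$ solves the scalar ODE
\[
\dot u = -\kappa\bigl(c_1 u^{r_1} + c_2 u^{r_2} + c_3 u\bigr), \qquad u(0) = V(w(0)).
\]
Hence it suffices to bound the time at which $u$ first reaches zero, after which Condition~(i) forces $\dot w = 0$ and the equilibrium is attained. Existence and uniqueness of the trajectory needed to justify this comparison is supplied by Proposition~\ref{unique sol da sy}, and the radial unboundedness of $V$ rules out finite-time escape along the way.

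I would then split the time interval at the level $u = 1$. In the ``large-$V$'' phase, while $u \geq 1$, the ordering $r_1 < 1 < r_2$ gives $u^{r_1} \leq u^{r_2}$, so dropping the $c_1 u^{r_1}$ term and separating variables yields
\[
t_1 \leq \frac{1}{\kappa}\int_{1}^{u(0)} \frac{du}{c_2 u^{r_2} + c_3 u}.
\]
The substitution $s = u^{r_2-1}$ converts the integrand into $\tfrac{1}{(r_2-1)\, s(c_2 s + c_3)}$, a rational function handled by partial fractions, whose supremum over $u(0)\in[1,\infty)$ is $\tfrac{1}{\kappa (r_2-1) c_3}\ln\!\bigl(1 + c_3/c_2\bigr)$. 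In the ``small-$V$'' phase, while $u\leq 1$, one has $u^{r_2} \leq u^{r_1}$, so discarding $c_2 u^{r_2}$ leads to
\[
t_2 - t_1 \leq \frac{1}{\kappa}\int_{0}^{1} \frac{du}{c_1 u^{r_1} + c_3 u}.
\]
The substitution $s = u^{1-r_1}$ linearizes the integrand and a direct evaluation yields $\tfrac{1}{\kappa (1-r_1) c_3}\ln\!\bigl(1 + c_3/c_1\bigr)$. Summing the two contributions and recognizing the bracket as $G_d$ from \eqref{pt10} produces $t_2 \leq \kappa^{-1} G_d = T_d$, which is precisely the predefined-time guarantee.

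The obstacle is algebraic rather than conceptual: one must choose substitutions that decouple the two nonlinear terms so that partial fractions apply, and treat the edge case $V(w(0)) \leq 1$ by setting $t_1 = 0$ so that only the second bound contributes. What makes the argument close cleanly is that both improper integrals are finite at their respective infinite/zero endpoints, which is exactly why the resulting settling-time estimate is independent of the initial state $w(0)$ and depends only on the user-chosen $T_d$ together with the coefficients $c_1, c_2, c_3, r_1, r_2$ through the explicit constant $G_d$.
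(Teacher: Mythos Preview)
Your argument is correct and complete. The paper, however, does not supply its own proof of this lemma: it is quoted verbatim from \cite{Zheng} and used as a black box in the proof of Theorem~\ref{tr2}. So there is no in-paper proof to compare against; your write-up is genuinely additive.

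A few remarks on the details. The two integrals you compute are exactly right: with $s=u^{r_2-1}$ one gets
\[
\int_{1}^{\infty}\frac{du}{c_2 u^{r_2}+c_3 u}=\frac{1}{(r_2-1)c_3}\ln\Bigl(1+\frac{c_3}{c_2}\Bigr),
\]
and with $s=u^{1-r_1}$ one gets
\[
\int_{0}^{1}\frac{du}{c_1 u^{r_1}+c_3 u}=\frac{1}{(1-r_1)c_3}\ln\Bigl(1+\frac{c_3}{c_1}\Bigr),
\]
so the sum is precisely $G_d$, and dividing by $\kappa=G_d/T_d$ yields the bound $T_d$. The splitting at $u=1$ is the natural device here because it is the crossover level at which $u^{r_1}$ and $u^{r_2}$ exchange dominance, and it is what makes each reduced integrand explicitly integrable.

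One small caveat: you invoke Proposition~\ref{unique sol da sy} to justify existence and uniqueness of trajectories, but that proposition concerns the specific system~\eqref{newydynamicalsystem}, whereas Lemma~\ref{lmpredefined} is stated for the generic system~\eqref{JJS1}. For the lemma to stand on its own you would need a hypothesis on~\eqref{JJS1} (e.g.\ local Lipschitz continuity of $\Xi$ away from the equilibrium) rather than a forward reference to a special case. This is a packaging issue, not a mathematical gap; the comparison argument itself is sound.
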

We are now in a position to state the main result of this section.
\begin{theorem}\label{tr2}   Let $ h~:  \R^d \longrightarrow \R^d $  be a Lipschitz continuous single-valued mapping with Lipschitz constant $\alpha$. Let   \(F: \R^d \longrightarrow \R^d\) be a   single-valued mapping. Let $ g~: \Omega \longrightarrow \mathbb{R}\cup \{+\infty\}$  be a proper, convex, and lower semicontinuous function. Assume that  Conditions in Assumption  ({\bf A}) hold. 

Let $\overline{w}\in \R^d$ be the unique equilibrium point of the dynamical system \eqref{newydynamicalsystem}.  Then

     For $l=1$, the solution \(\overline{w}\in \R^d\) of the GIMVIP \eqref{GIMVIP} is a fixed-time stable equilibrium point of \eqref{newydynamicalsystem} for any \(k_1\in (0, 1)\) and \(k_2>1\) and the following time estimate holds:
	$$T(w(0))\leq T_{\max}=\dfrac{1}{A_1(1-s_1)}+\dfrac{1}{A_2(s_2-1)}$$
	for some $A_1>0, A_2>0, s_1\in (0.5, 1), s_2>1$. 
	
	In addition, if we take $s_1=1-\frac{1}{2\zeta}, s_2=1+\frac{1}{2\zeta}$ with $\zeta>1$, the settling time satisfies the following:
	$$T(w(0))\leq T_{\max} =\dfrac{\pi \zeta}{\sqrt{b_1b_2}}$$
	for some constants $b_1>0, b_2>0$ and $\zeta>1$. 
    
 In particular,  if $k_3=0$, the \eqref{newydynamicalsystem} can reaches the equilibrium point within a predefined time $T_{d}$, where $T_{d}$ is a user-defined parameter, and $G_{d}= \frac{1}{a_{3} (\rho-\Lambda)\left(1-k_{1}\right)} \ln \left(1+\frac{a_{3} \sqrt{2}^{1-k_{1}} \Gamma^{1-k_{1}}}{a_{1}}\right) +\frac{1}{a_{3} (\rho-\Lambda)\left(k_{2}-1\right)} \ln \left(1+\frac{a_{3} \sqrt{2}^{1-k_{2}} (\rho-\Lambda)^{1-k_{2}}}{a_{2}}\right)$.

\end{theorem}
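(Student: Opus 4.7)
The plan is to use the standard Lyapunov function $V(w):=\tfrac12\|w-\overline{w}\|^2$, differentiate along the trajectories of \eqref{newydynamicalsystem}--\eqref{dnrho}, and reduce the resulting inequality to the hypothesis of Theorem~\ref{lm1} (for the fixed-time assertion) and of Lemma~\ref{lmpredefined} (for the predefined-time assertion). Differentiating and using Lemma~\ref{bdt}(iv) immediately gives, for every $w\in\R^d\setminus\{\overline{w}\}$,
\[
\dot V(w) \;=\; -\frac{G_d}{T_d}\,\tau(w)\,\langle w-\overline{w},\Xi(w)\rangle
\;\leq\; -\frac{G_d}{T_d}\,(\rho-\Lambda)\,\|w-\overline{w}\|^{2}\,\tau(w).
\]

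\noindent The three summands of $\tau(w)$ are then handled separately using Lemma~\ref{bdt}(iii). For the $a_1$-term, since $1-k_1>0$, I would apply the upper bound $\|\Xi(w)\|\le\Gamma\|w-\overline{w}\|$ to deduce
$a_1\|w-\overline{w}\|^{2}/\|\Xi(w)\|^{1-k_1}\ge (a_1/\Gamma^{1-k_1})\,\|w-\overline{w}\|^{1+k_1}$. For the $a_2$-term, since $1-k_2<0$, the lower bound $\|\Xi(w)\|\ge(\rho-\Lambda)\|w-\overline{w}\|$ yields
$a_2\|w-\overline{w}\|^{2}\|\Xi(w)\|^{k_2-1}\ge a_2(\rho-\Lambda)^{k_2-1}\|w-\overline{w}\|^{1+k_2}$. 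Converting via the identity $\|w-\overline{w}\|^{1+k}=(2V)^{(1+k)/2}$ produces
\[
\dot V(w)\;\leq\;-\frac{G_d}{T_d}\Bigl[\,c_1\,V^{(1+k_1)/2}+c_2\,V^{(1+k_2)/2}+R(w)\,\Bigr],
\]
where $c_1=(\rho-\Lambda)a_1\,2^{(1+k_1)/2}/\Gamma^{1-k_1}$, $c_2=(\rho-\Lambda)a_2(\rho-\Lambda)^{k_2-1}\,2^{(1+k_2)/2}$, and $R(w)\ge 0$ is the non-negative contribution of the $a_3$-summand.

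\noindent For the first claim (fixed-time stability for arbitrary $k_1\in(0,1)$, $k_2>1$), since $R(w)\ge 0$ one may discard it and apply Theorem~\ref{lm1} with $s_1=(1+k_1)/2\in(1/2,1)$, $s_2=(1+k_2)/2>1$, and $A_i=(G_d/T_d)c_i$, obtaining $T_{\max}=\tfrac{1}{A_1(1-s_1)}+\tfrac{1}{A_2(s_2-1)}$. The refined bound $\pi\zeta/\sqrt{b_1b_2}$ follows from the second part of Theorem~\ref{lm1} by choosing $k_1,k_2$ so that $(1+k_1)/2=1-\tfrac{1}{2\zeta}$ and $(1+k_2)/2=1+\tfrac{1}{2\zeta}$.

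\noindent For the predefined-time claim, setting $k_3=0$ reduces the $a_3$-summand of $\tau(w)$ to the constant $a_3$, so that $R(w)=2a_3(\rho-\Lambda)\,V$ becomes exactly the linear-in-$V$ term required in \eqref{pt9}. Applying Lemma~\ref{lmpredefined} with $r_1=(1+k_1)/2$, $r_2=(1+k_2)/2$, $c_3=2a_3(\rho-\Lambda)$, and the $c_1,c_2$ above, and inserting these into \eqref{pt10}, yields
\[
G_d=\frac{1}{c_3(1-r_1)}\ln\!\Bigl(1+\tfrac{c_3}{c_1}\Bigr)+\frac{1}{c_3(r_2-1)}\ln\!\Bigl(1+\tfrac{c_3}{c_2}\Bigr),
\]
which, after simplifying $\tfrac{c_3}{c_1}=a_3\Gamma^{1-k_1}\sqrt{2}^{\,1-k_1}/a_1$ and $\tfrac{c_3}{c_2}=a_3(\rho-\Lambda)^{1-k_2}\sqrt{2}^{\,1-k_2}/a_2$ and $\tfrac{1}{c_3(1-r_1)}=\tfrac{1}{a_3(\rho-\Lambda)(1-k_1)}$ (analogously for the second fraction), reproduces exactly the stated expression for $G_d$.

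\noindent The main obstacle will be purely bookkeeping: the powers of $2$ introduced by $\|w-\overline{w}\|=\sqrt{2V}$ must cancel against $1-r_i=(1-k_i)/2$ so that the $\tfrac12$ factors in the exponents produce the denominators $(1-k_1)$ and $(k_2-1)$ in $G_d$ \emph{without} spurious factors of~$2$; verifying this cancellation is the only delicate algebraic point. All other steps are direct applications of Lemma~\ref{bdt} and the cited stability lemmas.
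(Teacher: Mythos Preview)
Your proposal is correct and follows essentially the same route as the paper's own proof: the Lyapunov function $V(w)=\tfrac12\|w-\overline{w}\|^2$, Lemma~\ref{bdt}(iv) for the inner-product bound, Lemma~\ref{bdt}(iii) (upper bound $\Gamma\|w-\overline{w}\|$ on the $a_1$-term, lower bound $(\rho-\Lambda)\|w-\overline{w}\|$ on the $a_2$-term), and then Theorem~\ref{lm1} / Lemma~\ref{lmpredefined}; your constants $c_1,c_2,c_3$ coincide with the paper's $\tau_1,\tau_2,\tau_3$ and your derivation of $G_d$ is exactly the paper's computation. Two small remarks: (i) the paper opens by invoking Theorem~\ref{unique sol} and Proposition~\ref{unique sol da sy} to secure existence and uniqueness of the trajectory before differentiating $V$ along it---you should add this one-line justification; (ii) for the general-$k_3$ fixed-time part the paper absorbs the $a_3$-summand into the $a_2$-coefficient (leading to a $w$-dependent $C_2(k_2,u)$), whereas you simply discard it as a non-negative remainder $R(w)$---your treatment is cleaner and equally valid.
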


\begin{proof} 
 By Theorem \ref{unique sol} the generalized inverse variational inequality problem has a unique solution. By Part (iv) of Lemma \eqref{lm1}, all assumptions in Proposition \ref{unique sol da sy} are satisfied. Hence, the system \eqref{newydynamicalsystem} has a unique solution. Consider the following Lyapunov function\\
$V(w)=\frac{1}{2}\left\|w-\overline{w}\right\|^{2}$.
 Let  \(V: \R^d\rightarrow \R\) be the Lyapunov function defined by:
	\[V(w):=\dfrac{1}{2}\|w-\overline{w}\|^2.\]
  Differentiating with respect to time along the solution of \eqref{newydynamicalsystem} for Lyapunov function, starting from any \(w(0)\in  \R^d \setminus\{\overline{w}\}\) with noting that $\overline{w}\in \text{\rm Zer}(\Xi)$ being unique  one has:
	\begin{align}\label{eq12} 
		\dot{V}=&\langle w-\overline{w}, \dot{w}\rangle \notag \\
        = &- \frac{G_d}{T_d}\Big(a_1\|\Xi(w)\|^{k_1}+a_2\|\Xi(w)\|^{k_2}+a_3\Big)  \langle w-\overline{w}, \Xi(w)\rangle \notag \\
        \leq &-\frac{G_d}{T_d} \left( a_1\left(\rho-\Lambda \right) \dfrac{\|w-\overline{w}\|^2}{\|\Xi(w)\|^{1-k_1}}-a_2\big(\rho-\Lambda\big) \dfrac{\|w-\overline{w}\|^2}{\|\Xi(w)\|^{1-k_2}}-(\rho-\Lambda)a_3  \dfrac{\|w-\overline{w}\|^2}{\|\Xi(w)\|^{k_3}}  \right)
        \end{align}
	for all $w\in  \R^d\setminus \{\overline{w}\}$. 
Using Part (iii) of Lemma \ref{bdt} one obtains that 
	\begin{align}\label{nam1}
		\dot{V} \leq &\frac{G_d}{T_d}\left(-\dfrac{a_1(\rho-\Lambda)}{\Gamma^{1-k_1}}\|w-\overline{w}\|^{1+k_1}-(\rho-\Lambda)^{k_2}\Big( a_2+a_3\|\Xi(w)\|^{1-k_2-k_3}\Big) \|w-\overline{w}\|^{1+k_2} \right)\notag\\
        \leq & -C_1(k_1)\|w-\overline{w}\|^{1+k_1}-C_2(k_2, u) \|w-\overline{w}\|^{1+k_2},
	\end{align}
	where \(C_1(k_1)=\frac{G_d}{T_d}\cdot \dfrac{a_1(\rho-\Lambda)}{\Gamma^{1-k_1}}\) and \(C_2(k_2, u)= \frac{G_d}{T_d}(\rho-\Lambda)^{k_2}\Big( a_2+a_3\|\Xi(w)\|^{1-k_2-k_3}\Big)\). 
	Since $\rho>\Lambda$ and $a_i>0, i=1,2,3$ it holds that $C_1(k_1)>0, C_2(k_2, u)>0$ for all $0<r_1<1, r_2>1$. By setting $$A_i =2^{\frac{1+k_1}{2}}\frac{G_d}{T_d}\cdot \dfrac{a_1(\rho-\Lambda)}{\Gamma^{1-k_1}}$$ and choosing 
    $$A_2\in \left[2^{\frac{1+k_2}{2}} \frac{G_d}{T_d}(\rho-\Lambda)^{k_2}a_2, 2^{\frac{1+k_2}{2}} \frac{G_d}{T_d}(\rho-\Lambda)^{k_2}\Big( a_2+a_3\|\Xi(w(0))\|^{1-k_2-k_3}\Big)\right]$$
    we derive from \eqref{nam1} that 
      \begin{align}
        \dot{V} \leq &  -\Big(A_1V(w)^{s_1}+A_2V(w)^{s_2}\Big),\label{dgV} 
    \end{align}
	here  \(s_i=\frac{1+k_i}{2}\), for $i=1,2$. Note that \(A_1>0, 0.5<s_1<1\) for any \(k_1\in (0, 1)\) and \(A_2>0, s_2>1\) for any \(k_2>1\). Using Theorem \ref{lm1}, we conclude the conclusion of the theorem holds.

When $k_3=0$, from \eqref{eq12} we have 
\begin{align}\label{pt23}
\dot{V}&=\left\langle w-\overline{w}, \dot{w}\right\rangle \notag \\ 
& =-\frac{G_d}{T_d}\left\langle w-\overline{w}, a_{1} \frac{\Xi(w)}{\|\Xi(w)\|^{1-k_{1}}}+a_{2} \frac{\Xi(w)}{\|\Xi(w)\|^{1-k_{2}}}+a_{3} \Xi(w)\right\rangle  \notag \\
& \leq \frac{G_d}{T_d}\left(-a_{1} (\rho-\Lambda) \frac{\left\|w-\overline{w}\right\|^{2}}{\|\Xi(w)\|^{1-k_{1}}}-a_{2} (\rho-\Lambda) \frac{\left\|w-\overline{w}\right\|^{2}}{\|\Xi(w)\|^{1-k_{2}}}-a_{3} (\rho-\Lambda)\left\|w-\overline{w}\right\|^{2}\right) \notag \\
& \leq \frac{G_d}{T_d}\left(-\frac{a_{1} (\rho-\Lambda)}{\Gamma^{1-k_{1}}}\left\|w-\overline{w}\right\|^{1+k_{1}}-a_{2} (\rho-\Lambda) \frac{\left\|w-\overline{w}\right\|^{2}}{\|\Xi(w)\|^{1-k_{2}}}-a_{3} (\rho-\Lambda)\left\|w-\overline{w}\right\|^{2}\right) \notag \\
& \leq  \frac{G_d}{T_d}\left(-\frac{a_{1} (\rho-\Lambda)}{\Gamma^{1-k_{1}}}\left\|w-\overline{w}\right\|^{1+k_{1}}-a_{2} (\rho-\Lambda)^{k_{2}}\left\|w-\overline{w}\right\|^{1+k_{2}}-a_{3} (\rho-\Lambda)\left\|w-\overline{w}\right\|^{2}\right) \notag \\
& = \frac{G_d}{T_d}\left(-\frac{a_{1} (\rho-\Lambda)}{\Gamma^{1-k_{1}}}(2 V)^{\frac{1+k_{1}}{2}}-a_{2} (\rho-\Lambda)^{k_{2}}(2 V)^{\frac{1+k_{2}}{2}}-2 a_{3} (\rho-\Lambda) V\right) \notag\\
& =\frac{G_d}{T_d}\left(-\left(\sqrt{2}^{1+k_{1}} \frac{a_{1} (\rho-\Lambda)}{\Gamma^{1-k_{1}}} V^{\frac{1+k_{1}}{2}}+\sqrt{2}^{1+k_{2}} a_{2} (\rho-\Lambda)^{k_{2}} V^{\frac{1+k_{2}}{2}}+2 a_{3} (\rho-\Lambda) V\right)\right) \notag \\
& =\frac{G_d}{T_d}\left(-\tau_{1} V^{\frac{1+k_{1}}{2}}-\tau_{2} V^{\frac{1+k_{2}}{2}}-\tau_{3} V\right) 
\end{align}
where $\tau_{1}=\sqrt{2}^{1+k_{1}} \frac{a_{1} (\rho-\Lambda)}{\Gamma^{1-k_{1}}}>0, \tau_{2}=\sqrt{2}^{1+k_{2}} a_{2} (\rho-\Lambda)^{k_{2}}>0, \tau_{3}=2 a_{3} (\rho-\Lambda)$. Take

\begin{align*}
G_d= & \frac{2}{\tau_{3}\left(1-k_{1}\right)} \ln \left(1+\frac{\tau_{3}}{\tau_{1}}\right)+\frac{2}{\tau_{3}\left(k_{2}-1\right)} \ln \left(1+\frac{\tau_{3}}{\tau_{2}}\right) \\
= & \frac{1}{a_{3} (\rho-\Lambda)\left(1-k_{1}\right)} \ln \left(1+\frac{a_{3} \sqrt{2}^{1-k_{1}} \Gamma^{1-k_{1}}}{a_{1}}\right) \\
& +\frac{1}{a_{3} (\rho-\Lambda)\left(k_{2}-1\right)} \ln \left(1+\frac{a_{3} \sqrt{2}^{1-k_{2}} (\rho-\Lambda)^{1-k_{2}}}{a_{2}}\right).
\end{align*}
Then, by \eqref{pt23} and Lemma~\ref{lmpredefined}, we conclude that the equilibrium point of the dynamical system~\eqref{newydynamicalsystem} is reached within the predefined time \(T_d\). Moreover, for any initial condition \(w(0)\in \mathbb{R}^n\), the settling time satisfies
\[
T(w(0)) \le T_{\max} = T_d .
\]
The proof is completed. 
\end{proof}

\begin{remark}\label{rm4} 
    In line with Remark~\ref{connection finite}, when $l=1$ the above result ensures fixed-time stability for IMVIPs when $h$ is the identity mapping, and for IVIPs when $g \equiv 0$. Moreover, if $F$ is the identity mapping, fixed-time stability is obtained for MVIPs, which reduce to classical VIPs when $g \equiv 0$. When $l=0$, the above result implies the predefined stability for an equilibrium point of corresponding problems with a particular choice of mapping $F, h$ and the function $g$. 

When $k_3=1$ and $F$ is the identity mapping, the resulting dynamical system  \eqref{newydynamicalsystem} coincides with the one studied in \cite{XXJU}. Hence, as shown in \cite{XXJU}, if $h \equiv \nabla f$ for some differentiable function $f$, the system \eqref{newydynamicalsystem} yields fixed-time stability for the constrained optimization problem
\begin{equation}\label{Cop}
    \min_{w\in \R^d} f(w)+g(w),
\end{equation}
under suitable assumptions on $f$ and $g$. Note that $g$ plays the role of a constraint; in particular, if $g \equiv 0$, \eqref{Cop} reduces to an unconstrained optimization problem, which can be viewed as a special case of the GIMVIP \eqref{GIMVIP}.

Furthermore, following \cite{XXJU}, let $\mathcal{L}:\R^d \times \R^m \to \R$ be differentiable and define
\[
h(w,v) = \big(\nabla_w \mathcal{L}(w,v), -\nabla_v \mathcal{L}(w,v)\big).
\]
Under suitable assumptions on $\mathcal{L}$, fixed-time stability is obtained for the minimax problem
\[
\inf_{w\in \R^d}\sup_{v\in \R^m} \mathcal{L}(w,v).
\]

\end{remark}
    

\section{ Discretization of the dynamical system} \label{sec4}
This section is devoted to the iterative method. By employing a forward difference scheme for time discretization, we derive an iterative algorithm for the GIMVIP and show that fixed-time stability is preserved. We first recall a result in a more general setting. 
\subsection{Discretization of the finite-time dynamical system}
We first apply a forward discretization to the system~\eqref{newydynamicalsystem-finite} with step size $\Delta t_n$, which yields the following iterative algorithm.
\begin{equation*} 
    \dfrac{w(t_{n+1})-w(t_n)}{\Delta t_n}=-\tau  \dfrac{\Xi\left(w(t_n)\right)}{\|\Xi\left(w(t_n)\right)\|^{\frac{k-2}{k-1}}},  k>2, \tau>0.
\end{equation*}
or 
\begin{equation}\label{alg1}
w(t_{n+1})=w(t_n) -\tau\Delta t_n  \dfrac{\Xi\left(w(t_n)\right)}{\|\Xi\left(w(t_n)\right)\|^{\frac{k-2}{k-1}}},  k>2, \tau>0. 
\end{equation}
Specially when $\Delta t_n=\theta$ fixed step size Algorithm \eqref{alg1} becomes
\begin{equation}\label{alg2}
  w(t_{n+1})=w(t_n) -\tau\theta   \dfrac{\Xi\left(w(t_n)\right)}{\|\Xi\left(w(t_n)\right)\|^{\frac{k-2}{k-1}}}, k>2, \tau>0. 
\end{equation}
Setting $w(t_n)=w_n$ for all $n=1,2\dots,$ \eqref{alg2} reads as
\begin{equation}\label{alg2}
  w_{n+1}=w_n -\tau\theta   \dfrac{\Xi\left(w_n\right)}{\|\Xi\left(w_n\right)\|^{\frac{k-2}{k-1}}}, k>2, \tau>0. 
\end{equation}
\subsection{Discretization of the finite-time dynamical system}

To establish the preservation of fixed-time stability under discretization,
we study a more general framework by considering the differential inclusion
\begin{equation}\label{eq20}
	\dot{w} \in \Upsilon(w),
\end{equation}
and impose the following assumption.

    \noindent ({\bf B}) 	Let  \(\Upsilon: \R^d \longrightarrow 2^{\R^d}\) be an upper semi-continuous set-valued mapping such that its values are non-empty, convex, compact, and  \(0\in \Upsilon(\overline{w})\) for some \(\overline{w}\in  \R^d\). Assume  that there exists a positive definite, radially unbounded, locally Lipschitz continuous and regular function \(V: \R^d\rightarrow \R^d\) such that \(V(\overline{w})=0\) and 
	\begin{equation*}
		\sup \dot{V}(w)\leq -\left( A_1 V(w)^{1-\frac{1}{\chi}}+A_2 V(w)^{1+\frac{1}{\chi}}\right) 
	\end{equation*}
	for all \(w\in   \R^d\setminus\{\overline{w}\}\), with \(A_1, A_2>0\) and \(\chi>1\), here  
	\begin{equation*}
		\dot{V}(w)=\left\{ z\in \R: \exists w\in \Upsilon (w) \mbox{ such that } \langle v, w\rangle =z, \forall v\in \partial_cV(w)\right\}
	\end{equation*}
	Here  \(\partial_c V(w)\) is Clarke's generalized gradient of the function \(V\) at the point \(w\in   \R^d\).
    
\noindent Using forward  discretization  of \eqref{eq20} with step size $\Delta t_n>0$  we obtain 
\begin{equation*}
    \dfrac{w(t_{n+1})-w(t_n)}{\Delta t_n} = \Upsilon\left(w(t_n) \right)
\end{equation*}
Choosing $\Delta t_n=\theta$ and setting $w(t_n)=w_n$ for all $n=1,2,\dots$ we get 
\begin{equation}\label{eq24} 
	w_{n+1}\in w_n+\theta \Upsilon(w_n). 
\end{equation}

We recall the following result from \cite{Garg21}, which provides an upper bound for the error of the sequence generated by \eqref{eq24}.
\begin{theorem}\cite{Garg21}\label{tr4}  Let $\overline{w}$  is the equilibrium point of \eqref{eq20}. 
	Assume that the conditions in Assumption ({\bf B}) holds and  the function \(V\) also satisfies the following quadratic growth condition
	\begin{equation*}
		V(w)\geq c\|w-\overline{w}\|^2 
	\end{equation*}
	for every $w\in  \R^d$, and for some $c>0$. 
	Then, for all \(w_0\in  \R^d\) and \(\eps>0\), there exists 
	 $\theta^*>0$ such that for any \(\theta\in (0, \theta^*]\), one has:
	\begin{equation*}
		\|w_n-\overline{w}\| <\begin{cases}
			\frac{1}{\sqrt{c}} \left( \sqrt{\frac{A_1}{A_2}}\tan\left(\frac{\pi}{2}-\frac{\sqrt{A_1A_2}}{\chi}\theta n\right)\right)^{\frac{\chi}{2}}+\eps &\mbox{ if }\ n\leq n^*,\\
			\eps & \mbox{otherwise},
		\end{cases}
	\end{equation*}
	where \(w_n\) is a solution  of \eqref{eq24} starting from the point \(w_0\)  and $n^* = \left \lceil \dfrac{\chi\pi}{2\lambda\sqrt{A_1A_2}} \right \rceil$.
\end{theorem}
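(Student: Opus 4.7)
The plan is to combine an explicit integration of the continuous-time Lyapunov inequality with a consistency-and-stability transfer to the forward Euler scheme. First, working with the differential inclusion~\eqref{eq20} under Assumption~(\textbf{B}), I would invoke the Clarke chain rule for the regular function $V$: for any absolutely continuous solution $w(\cdot)$, the scalar map $t\mapsto V(w(t))$ is differentiable almost everywhere and satisfies
\[
\frac{d}{dt}V(w(t))\le -A_{1}\,V(w(t))^{1-1/\chi}-A_{2}\,V(w(t))^{1+1/\chi}.
\]
Setting $u=V^{1/\chi}$ reduces this to $\dot u\le -(A_{1}+A_{2}u^{2})/\chi$, which integrates via arctangent; comparing against the worst-case initial value $u(0)=+\infty$ yields the state-independent bound
\[
V(w(t))\le\Bigl(\sqrt{A_{1}/A_{2}}\,\tan\bigl(\tfrac{\pi}{2}-\tfrac{\sqrt{A_{1}A_{2}}}{\chi}t\bigr)\Bigr)^{\chi},
\]
valid on $[0,T_{\max}]$ with $T_{\max}=\chi\pi/(2\sqrt{A_{1}A_{2}})$. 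The quadratic-growth hypothesis $V\ge c\|\cdot-\overline{w}\|^{2}$ then translates this into a pointwise bound on $\|w(t)-\overline{w}\|$ at $t=\theta n$, which is the continuous-time prototype of the estimate sought.

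Next I would pass to the Euler iterates defined by~\eqref{eq24}. Fix $w_{0}\in\R^{d}$ and $\eps>0$, and introduce a bounded enlargement $K$ of the reachable set of the continuous trajectory over $[0,T_{\max}]$. On a neighborhood of $K$, upper semicontinuity combined with compactness of values makes $\Upsilon$ uniformly bounded, say by $M$, and $V$ is Lipschitz there (locally Lipschitz plus confinement). For any selection $v_{n}\in\Upsilon(w_{n})$ and $w_{n+1}=w_{n}+\theta v_{n}$, the Clarke chain rule along the segment gives the perturbed one-step decrease
\[
V(w_{n+1})-V(w_{n})\le -\theta\bigl(A_{1}V(w_{n})^{1-1/\chi}+A_{2}V(w_{n})^{1+1/\chi}\bigr)+C\theta^{2},
\]
where $C$ depends only on $M$ and the Lipschitz modulus of $V$. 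A standard bootstrap using this very inequality shows the iterates remain in $K$ once $\theta$ is small, closing the argument.

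Viewing the last display as a perturbed forward Euler approximation of the scalar ODE from the first paragraph and invoking a monotone comparison lemma yields
\[
V(w_{n})\le\Bigl(\sqrt{A_{1}/A_{2}}\,\tan\bigl(\tfrac{\pi}{2}-\tfrac{\sqrt{A_{1}A_{2}}}{\chi}\theta n\bigr)\Bigr)^{\chi}+\eta(\theta),\qquad n\le n^{*},
\]
where $\eta(\theta)\to 0$ as $\theta\to 0$. Choosing $\theta^{*}$ small enough that $\eta(\theta)\le c\eps^{2}$ for all $\theta\in(0,\theta^{*}]$, applying $V\ge c\|\cdot-\overline{w}\|^{2}$ and the elementary estimate $\sqrt{a+b}\le\sqrt{a}+\sqrt{b}$, yields the first branch of the piecewise estimate. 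For $n>n^{*}$ the continuous portion of the bound vanishes, so $V(w_{n})\le\eta(\theta)\le c\eps^{2}$, producing the second branch $\|w_{n}-\overline{w}\|<\eps$.

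The main obstacle is controlling the discretization error near the equilibrium: the term $V^{1-1/\chi}$ makes the effective ``gradient'' of the dynamics unbounded as $V\to 0$, so the $C\theta^{2}$ perturbation eventually dominates and prevents the discrete scheme from preserving fixed-time convergence exactly. This is precisely why the theorem tolerates an $\eps$-residual rather than reaching $\overline{w}$ in finitely many steps. Making this quantitative requires the step-size threshold $\theta^{*}$ to absorb two competing constraints: it must be small enough both to keep the iterates inside $K$ over at most $n^{*}$ steps and to guarantee $\eta(\theta)\le c\eps^{2}$, and the explicit form of $\eta$ in terms of $w_{0}$ and the data of $V$ is where the most careful bookkeeping is required.
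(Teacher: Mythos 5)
You should note first that the paper offers no proof of this statement: Theorem~\ref{tr4} is quoted verbatim from \cite{Garg21}, so the only possible comparison is with the argument in that reference, which establishes the bound by combining the continuous-time fixed-time estimate with a \emph{consistency} (uniform closeness on compact time horizons) result for forward-Euler approximations of the differential inclusion \eqref{eq20}, and then transferring the estimate through the quadratic growth of $V$. Your overall skeleton — integrate the Lyapunov inequality to get the $\tan$ bound with the worst-case initial condition, pass to the Euler iterates \eqref{eq24}, and absorb the discretization error into the $\eps$-margin — is the right one and matches the spirit of that proof.

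The genuine gap is the one-step inequality $V(w_{n+1})-V(w_n)\le -\theta\bigl(A_1V(w_n)^{1-1/\chi}+A_2V(w_n)^{1+1/\chi}\bigr)+C\theta^2$ with ``$C$ depending only on $M$ and the Lipschitz modulus of $V$''. Under Assumption~({\bf B}), $V$ is only locally Lipschitz and regular and $\Upsilon$ is only upper semicontinuous with compact convex values. The Clarke chain rule along the segment $s\mapsto w_n+s\theta v_n$ pairs subgradients $\xi\in\partial_cV(w_n+s\theta v_n)$ with the \emph{fixed} velocity $v_n\in\Upsilon(w_n)$, whereas the Lyapunov hypothesis only controls pairings of $\partial_cV(w)$ with elements of $\Upsilon(w)$ at the \emph{same} point $w$; since neither $\partial_cV$ nor $\Upsilon$ comes with a modulus of continuity (usc gives no rate), no $O(\theta^2)$ — indeed no quantitative — remainder follows from the mere Lipschitz modulus of $V$. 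The estimate you write would require $V\in C^{1,1}$ (Lipschitz gradient) on the confining set $K$, which holds in the application $V=\tfrac12\|\cdot-\overline{w}\|^2$ but not in the generality in which the theorem is stated; this is precisely why \cite{Garg21} argues via uniform convergence of Euler polygons to solutions of the inclusion rather than a per-step Taylor expansion. Two secondary points also need attention if you pursue your route: the discrete comparison with the scalar ODE must handle the non-Lipschitz term $u^{1-1/\chi}$ near $u=0$ (the comparison sequence can undershoot $0$ even though $V\ge 0$), and the branch $n>n^*$ requires an explicit forward-invariance argument for the sublevel set $\{V\le c\eps^2\}$ under the discrete update, which you assert but do not carry out.
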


Using this result, we obtain the fixed-time stability for the discretization version of \eqref{newydynamicalsystem} and get an upper bound for settling time. 
\begin{theorem}\label{dl cuoi}
	Consider the forward discretization version of \eqref{newydynamicalsystem}:
	\begin{equation}\label{eq29} 
		w_{n+1}=w_n-\theta \frac{G_d}{T_d}\tau(w_n)\Xi(w_n),
	\end{equation}
	where  \(\tau\) is given by \eqref{dnrho},  \(a_1, a_2>0, a_3>0, k_1=1-2/\chi\) and \(k_2=1+2/\chi\), \(\chi\in (2, \infty)\), and \(\theta>0\) is the time-step. Then, for every \(w_0\in  \R^d\), every \(\eps>0\), if  Assumption {\bf (A)} holds, there exist \(\chi>2, A_1, A_2>0\) and \(\theta^*>0\) such that for any \(\theta\in (0, \theta^*]\), we have: 
	\begin{equation*}
		\|w_n-\overline{w}\|< \begin{cases}
			\sqrt{2}\left( \sqrt{\frac{A_1}{A_2}}\tan\left(\frac{\pi}{2}-\frac{\sqrt{A_1A_2}}{\chi}\lambda n\right)\right)^{\frac{\chi}{2}}+\eps &  \mbox{ if } n\leq n^*,\\ 
			\eps & \mbox{otherwise},
		\end{cases}
	\end{equation*}
	where \(n^*=\left\lceil \dfrac{\chi\pi}{2\lambda \sqrt{A_1A_2}} \right\rceil\) and \(w_n\) is determined by \eqref{eq29} starting from the point \(w_0\) and \(\overline{w}\in  \R^d\) is the unique solution of the GIMVIP \eqref{GIMVIP}. 
\end{theorem}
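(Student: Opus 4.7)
The plan is to cast the iteration~\eqref{eq29} into the abstract framework~\eqref{eq24} and invoke Theorem~\ref{tr4}. Set $\Upsilon(w):=-\tfrac{G_d}{T_d}\,\tau(w)\,\Xi(w)$ and view it as a set-valued map with singleton values, so that the non-emptiness, convexity and compactness demanded in Assumption~\textbf{(B)} are automatic. The inclusion $0\in\Upsilon(\overline{w})$ is then Proposition~\ref{pro4-fixed} together with the convention $\tau(\overline{w})=0$ from~\eqref{dnrho}. For upper semicontinuity, continuity on $\R^d\setminus\{\overline{w}\}$ follows from continuity of $\Xi$, while at $\overline{w}$ each summand of $\tau(w)\Xi(w)$ has norm comparable to $\|\Xi(w)\|^{k_1}$, $\|\Xi(w)\|^{k_2}$ or $\|\Xi(w)\|^{1-k_3}$ with positive exponent (in the admissible parameter range), and $\|\Xi(w)\|\le \Gamma\|w-\overline{w}\|\to 0$ by Lemma~\ref{bdt}(iii).

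For the Lyapunov condition I would take $V(w)=\tfrac12\|w-\overline{w}\|^2$, which is $C^1$ (hence locally Lipschitz and regular), positive definite, radially unbounded, and satisfies the quadratic growth bound $V(w)\ge c\|w-\overline{w}\|^2$ with $c=\tfrac12$. The decisive observation is that the parameter choice $k_1=1-2/\chi\in(0,1)$, $k_2=1+2/\chi>1$ (legitimate since $\chi>2$) makes the exponents $s_i=(1+k_i)/2$ produced in the proof of Theorem~\ref{tr2} coincide exactly with $1-1/\chi$ and $1+1/\chi$. Estimate~\eqref{dgV} from that proof therefore becomes
\[
\dot V(w)\le -\bigl(A_1\,V(w)^{1-1/\chi}+A_2\,V(w)^{1+1/\chi}\bigr),
\]
with $A_1,A_2>0$ constructed there, which is precisely the decay form required by Assumption~\textbf{(B)}. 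Theorem~\ref{tr4} then applies to~\eqref{eq29} and delivers the stated estimate, with prefactor $\sqrt{2}=1/\sqrt{c}$ in front of the $\tan$ term and the same cut-off $n^\ast$.

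The main obstacle is bookkeeping rather than conceptual. The upper end of the admissible interval for $A_2$ read off from~\eqref{nam1}--\eqref{dgV} involves $\|\Xi(w(0))\|^{1-k_2-k_3}$, so once the initial iterate $w_0$ is fixed one can select $A_1,A_2$ as genuine constants independent of $n$, which is all Theorem~\ref{tr4} requires. A secondary point is the embedding of the single-valued iteration~\eqref{eq29} into the differential-inclusion framework of Theorem~\ref{tr4}; taking $\Upsilon$ singleton-valued resolves this at once. Beyond these verifications no analytic ingredient beyond those already present in the continuous-time proof of Theorem~\ref{tr2} is required.
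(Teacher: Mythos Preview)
Your proposal is correct and follows essentially the same route as the paper: verify the Lyapunov decay inequality of Assumption~\textbf{(B)} by importing estimate~\eqref{dgV} from the proof of Theorem~\ref{tr2}, note that the choice $k_i=1\mp 2/\chi$ gives exponents $s_i=(1+k_i)/2=1\mp 1/\chi$, take $V(w)=\tfrac12\|w-\overline{w}\|^2$ so that the quadratic growth constant is $c=\tfrac12$ (whence the prefactor $\sqrt{2}$), and invoke Theorem~\ref{tr4}. The paper's own argument is the same in outline but terser; your explicit checks of upper semicontinuity of $\Upsilon$ and of the singleton-valued embedding are details the paper leaves implicit.
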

\begin{proof} We observe from the proof of Theorem \ref{tr2}  that  inequality \eqref{dgV} is valid for any \(s_1\in  (0, 1)\)  and \(s_2>1\). Also, since \( s_1 =1-\dfrac{2}{\chi}\) and \(s_2 =1+\dfrac{2}{\chi}\), it holds for  any \(\chi>2\). Hence, all conditions in Theorem \ref{tr4} are fulfilled with the Lyapunov function $V(w)=\dfrac{1}{2}\|w-\overline{w}\|^2$ and with $c=\frac{1}{2}$. Applying Theorem~\ref{tr4} yields the desired conclusion.
\end{proof}


\section{Numerical  illustration}\label{num}
In this section, we present a numerical example to illustrate the convergence behavior of the iterative algorithm \eqref{eq29}. To this end, we reuse Example~\ref{ex1}. The numerical simulations were implemented in Python using Google Colab.

\begin{example}
Let $d=1$, $\Omega=[0,+\infty)$, and define
\[
h(w)=\tfrac{w}{2}, \qquad F(w)=\tfrac{3w}{4}, \qquad g(w)=w^2+2w+1, \quad w\in\R.
\]
As shown in Example~\ref{ex1}, the mappings $h$ and $F$, together with the function $g$, satisfy Assumption~\textbf{(A)} with
\[
\alpha=\lambda=\tfrac{1}{2}, \qquad \beta=\tfrac{3}{4}, \qquad \rho=\tfrac{4}{3}, \qquad \mu=\tfrac{3}{8}.
\]

Algorithm~\eqref{eq29} is implemented with the parameters
\(\gamma = 1\), \(a_1 = 0.9\), \(a_2 = 0.5\), \(a_3 = 10^{-4}, G_d=T_d=1\),
\(k_1 = 0.4\), \(k_2 = 1.5\), and
\(\theta = 10^{-4} + \frac{1}{n}\), where \(n\) denotes the iteration index.
Algorithm~\eqref{alg2} is implemented with the parameters
\(\tau = 1\) and \(\theta = 0.2\).

Starting from the initial value \(w_0 = 50.0\), after \(150\) iterations the sequence generated by~\eqref{eq29} converges to the approximate solution
\(-5.33 \times 10^{-5}\) when \(k_3 = 1\), and to \(-1.13 \times 10^{-4}\) when \(k_3 = 0\).
In contrast, the sequence generated by Algorithm~\eqref{alg2} converges to the approximate value \(2.08\) for \(k = 3\), and to \(1.39 \times 10^{-3}\) for \(k = 2\).

\end{example}
\begin{figure}[H]
    \centering
    \includegraphics[width=1.\linewidth]{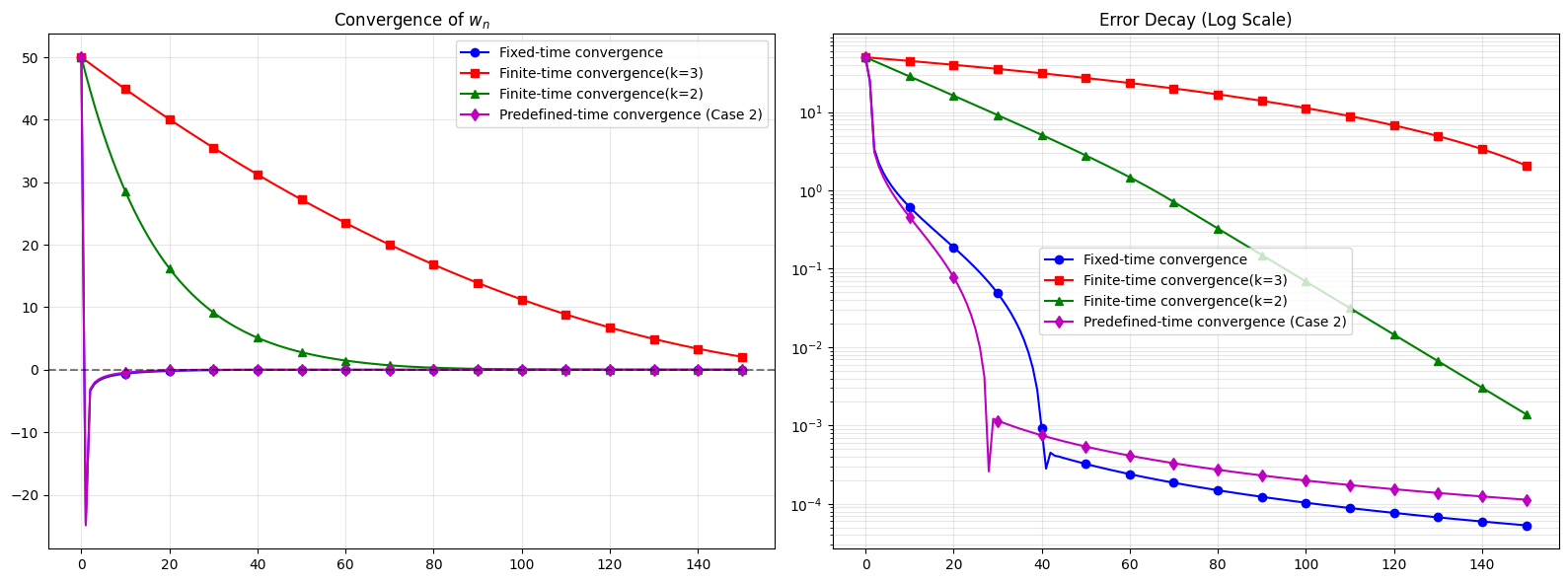}
    \caption{Convergence rate of Algorithm \ref{eq29}.}
    \label{fig:nam}
\end{figure}

\begin{remark}
  By Remark~\ref{rm4}, the dynamical system proposed in~\cite{XXJU} is a special case of~\eqref{newydynamicalsystem} when \(F\) is the identity mapping and \(k_3 = 1\). Consequently, the numerical examples presented in~\cite{XXJU} illustrate the effectiveness of our proposed algorithm in this setting. Likewise, the differential equation system studied in~\cite{Zheng} coincides with~\eqref{newydynamicalsystem} when \(F\) is the identity mapping and \(k_3 = 0\); hence, the numerical examples therein can also be used to demonstrate the performance of our method. Moreover, when \(a_3 = 0\) and \(F\) is the identity mapping, system~\eqref{newydynamicalsystem} reduces to the one investigated in~\cite{GAR}, and the corresponding numerical examples further validate the effectiveness of our approach.

\end{remark}

\section{Conclusion}\label{sec6}

This work has developed three dynamical-system-based approaches for solving generalized inverse mixed variational inequality problems. 
Specifically, we proposed continuous-time dynamical systems that achieve finite-time, fixed-time, and predefined-time stability, respectively. 
The finite-time stable system ensures convergence within a finite settling time that depends on the initial condition, whereas the fixed-time stable system guarantees convergence within a uniform time bound independent of the initial state. 
More importantly, the predefined-time stable system allows the convergence time to be explicitly prescribed in advance through appropriate parameter selection, providing greater flexibility and user autonomy in time-critical applications.

For the fixed-time and predefined-time stable models, we established the existence and uniqueness of solutions to the corresponding GIMVIPs and proved that the generated trajectories converge to the unique solution within the specified time bounds. 
Furthermore, it was shown that the convergence properties of the continuous-time dynamics are preserved under an explicit forward Euler discretization, leading to a proximal point-type iterative algorithm with guaranteed fixed-time and predefined-time convergence. 
Numerical experiments were presented to validate the theoretical findings and to demonstrate the effectiveness and accelerated convergence behavior of the proposed methods.

Several directions for future research merit further investigation. 
These include extending the proposed finite-time, fixed-time, and predefined-time stability analyses to more general settings, such as infinite-dimensional Hilbert or Banach spaces, as well as relaxing the assumptions imposed on the underlying operators and model parameters. 
Another promising avenue is the development of adaptive or data-driven parameter selection strategies for predefined-time stable algorithms, which could further enhance their applicability in large-scale optimization and inverse equilibrium problems.

\section*{Declarations} 
{\bf Conflict of interest} The authors declare no competing interests.

\end{document}